\pgfplotsset{width=7.5cm,compat=1.9}
\DeclareMathOperator{\Aut}{Aut}
\DeclareMathOperator{\PSL}{PSL}
\DeclareMathOperator{\SL}{SL}
\DeclareMathOperator{\pg}{PG}
\DeclareMathOperator{\ag}{AG}
\DeclareMathOperator{\Sp}{Sp}
\DeclareMathOperator{\GO}{GO}
\DeclareMathOperator{\PGaL}{P\Gamma L}
\DeclareMathOperator{\AGL}{AGL}
\DeclareMathOperator{\GL}{GL}
\DeclareMathOperator{\GaL}{\Gamma L}
\DeclareMathOperator{\SU}{SU}
\renewcommand{\leq}{\leqslant}
\renewcommand{\geq}{\geqslant}
\newcommand{\ra}{\rightarrow}
\newcommand{\sset}{\subseteq}
\newcommand{\F}{\mathbb F}
\newcommand{\D}{\mathcal D}
\newcommand{\B}{\mathcal B}
\renewcommand{\P}{\mathcal P}
\theoremstyle{plain}
\newtheorem{lemma}{Lemma}
\newtheorem{theorem}[lemma]{Theorem}
\theoremstyle{definition}
\newtheorem{definition}[lemma]{Definition}
\newtheorem{remark}[lemma]{Remark}
\numberwithin{equation}{section}
\numberwithin{lemma}{section}
\title{The Non-Existence of Block-Transitive Subspace Designs\thanks{The first author is supported by the Croatian Science Foundation under the project 6732. The second author acknowledges the support of the Australian Research Council Discovery Grant DP200101951. This work was supported by resources provided by the Pawsey Supercomputing Centre with funding from the Australian Government and the Government of Western Australia.}}
\author{
 Daniel R. Hawtin$^1$
 \and
 Jesse Lansdown$^2$
}
\date{
 \small{
  \emph{
   $^1$Department of Mathematics, University of Rijeka\\
   Rijeka, Croatia, 51000.\\
  }
  \href{mailto:dan.hawtin@gmail.com}{dan.hawtin@gmail.com}\\
  \vspace{0.25cm}
 }
 \small{
  \emph{
   $^2$Centre for the Mathematics of Symmetry and Computation,\\
   The University of Western Australia\\
   Perth, Western Australia, 6009.\\
  }
  \href{mailto:jesse.lansdown@uwa.edu.au}{jesse.lansdown@uwa.edu.au}\\
  \vspace{0.25cm}
 }
 \today
}
\begin{document}
\maketitle

\begin{abstract}
 Let $q$ be a prime power and $V\cong\F_q^d$. A \emph{$t$-$(d,k,\lambda)_q$ design}, or simply a \emph{subspace design}, is a pair $\D=(V,\B)$, where $\B$ is a subset of the set of all $k$-dimensional subspaces of $V$, with the property that each $t$-dimensional subspace of $V$ is contained in precisely $\lambda$ elements of $\B$. Subspace designs are the \emph{$q$-analogues} of balanced incomplete block designs. Such a design is called \emph{block-transitive} if its automorphism group $\Aut(\D)$ acts transitively on $\B$. It is shown here that if $t\geq 2$ and $\D$ is a block-transitive $t$-$(d,k,\lambda)_q$ design then $\D$ is trivial, that is, $\B$ is the set of all $k$-dimensional subspaces of $V$. 
\end{abstract}

\section{Introduction and preliminaries}

Tits \cite{tits1957analogues} suggested that combinatorics of sets could be regarded as the limiting case $q\ra 1$ of combinatorics of vector spaces over the finite field $\F_q$. Taking a combinatorial property expressed in terms of sets and rephrasing its definition in terms of $\F_q$-vector spaces gives rise to what has become known as the \emph{$q$-analogue} of the original property. A \emph{$t$-$(d,k,\lambda)$ design} is a pair $(\P,\B)$ where $\P$ is a set of size $d$ and the elements of $\B$ are $k$-subsets of $\P$, called \emph{blocks}, satisfying the condition that every $t$-subset of $\P$ is contained in precisely $\lambda$ blocks.
The $q$-analogue of a $t$-$(d,k,\lambda)$ design is a $t$-$(d,k,\lambda)_q$ design. A precise definition of $t$-$(d,k,\lambda)_q$ designs is given in Definition~\ref{subspacedesdef}.
For brevity we often refer to $t$-$(d,k,\lambda)$ designs and $t$-$(d,k,\lambda)_q$ designs simply as \emph{block designs} and \emph{subspace designs}, respectively.
Subspace designs were first referenced in the literature by Cameron \cite{cameron1973generalisation}. See the recent survey of Braun et al.~\cite{braun2018q} for more in-depth background on subspace designs. 
A block or subspace design is \emph{block-transitive} if it admits a group of automorphisms that acts transitively on its set of blocks (see Definition~\ref{autgroupdef} for a precise definition of the automorphism group of a subspace design).

Automorphism groups of block designs have been studied since the mid-twentieth century (see, for example, \cite{hall1960automorphisms,parker1957collineations}). There are many examples of block designs having large automorphism groups and restricting attention to a subclass of block designs with a high degree of symmetry can be a useful tool for studying designs (for example \cite{Kantor1969,kantor19752}); classification of such a subfamily may even be possible (for example \cite{kantor1985classification}). Block-transitive block designs have been studied since the 1980s (see \cite{camina1984block,camina1989block}) and are a vast enough class of designs that authors often restrict their study to designs satisfying additional conditions (see \cite{cameron1993block,huiling1995block,o1993block}). 

In contrast to the situation with block designs, there are no known examples of block-transitive subspace designs. Moreover, the known subspace designs do not have particularly large automorphism groups. For instance, the automorphism groups of the subspace designs in \cite{braunqSteinerexist2016,miyakawa1995class,suzuki19902,suzuki19922,thomas1987designs} are all normalisers of Singer cycles. Furthermore, if a binary $q$-analogue of the Fano plane exists then its automorphism group has size at most $2$ \cite{kiermaier2018order}. Indeed, our main result, Theorem~\ref{maintheorem}, shows that non-trivial block-transitive subspace designs do not exist.

\begin{theorem}\label{maintheorem}
There exist no non-trivial block-transitive $t$-$(d,k,\lambda)_q$ designs for $2\leq t<k<d$ and $q$ a prime power.
\end{theorem}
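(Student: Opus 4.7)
The plan is to argue that the automorphism group $G\leq\PGaL(d,q)$ of a block-transitive $t$-$(d,k,\lambda)_q$ design must in fact act transitively on \emph{all} $k$-dimensional subspaces of $V$. Since $\B$ is a single $G$-orbit by block-transitivity, this immediately forces $\B$ to coincide with the full set of $k$-subspaces, so $\D$ is trivial.

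The first step is a $q$-analogue of Block's lemma: if $G\leq\Aut(\D)$ is block-transitive on a $t$-$(d,k,\lambda)_q$ design with $t\geq 2$, then for each $1\leq i\leq t$ the group $G$ acts transitively on the set of $i$-dimensional subspaces of $V$. The argument mimics the classical set-theoretic case. Writing $\mathcal{O}_1,\ldots,\mathcal{O}_s$ for the $G$-orbits on $i$-subspaces, and using that every $i$-subspace lies in the same number $\lambda_i$ of blocks (a constant depending only on the parameters), one shows that the $s\times 1$ tactical decomposition matrix whose $j$-th entry counts the $i$-subspaces from $\mathcal{O}_j$ lying in a fixed block has rank $s$; as $G$ has a single block-orbit this forces $s=1$.

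The second step invokes the classification of subgroups of $\PGaL(d,q)$ transitive on $m$-subspaces of $V$ for $2\leq m\leq d-2$. Step~1 applied at $i=t$ yields $G$ transitive on $t$-subspaces, and because $2\leq t\leq k-1\leq d-2$ this falls within the range of such a classification. A theorem going back to Perin, together with subsequent refinements by Cameron, Kantor, and others, asserts that any such $G$ must contain $\PSL(d,q)$, with only a short list of sporadic exceptions arising at small parameters.

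Whenever $G\supseteq\PSL(d,q)$, transitivity on all $k$-subspaces is immediate, and the argument concludes. The main obstacle is the treatment of the exceptional groups produced by Step~2: for each exceptional configuration one must verify either that $G$ still acts transitively on $k$-subspaces, or that no $G$-orbit on $k$-subspaces can satisfy the $t$-design equations. I expect this to be the most delicate part of the proof, reducing in the hardest small-parameter cases to a direct combinatorial or computer-assisted verification, consistent with the supercomputing resources acknowledged by the authors.
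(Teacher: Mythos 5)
Your Step~1 is where the argument breaks. The $q$-analogue of Block's lemma does not give transitivity on $i$-dimensional subspaces for all $i\leq t$: the tactical-decomposition argument you sketch requires the incidence matrix between $i$-subspaces and the \emph{blocks of the design} to have full row rank, and this is only guaranteed in the Fisher/Ray--Chaudhuri--Wilson range $2i\leq t$. For $i=t$ it generally fails for a counting reason: a $t$-$(d,k,\lambda)_q$ design has $\lambda{d\choose t}_q/{k\choose t}_q$ blocks, which for small $\lambda$ is far fewer than ${d\choose t}_q$, so the matrix has more rows than columns and cannot have rank ${d\choose t}_q$. This is exactly the situation for block designs, where block-transitivity yields only $\lfloor t/2\rfloor$-homogeneity (Cameron--Praeger), not $t$-homogeneity. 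Since the theorem must cover $t=2$ (and by the reduction in Lemma~\ref{twodesigncor} the whole problem reduces to $t=2$), all you can extract from your Step~1 is transitivity on $1$-subspaces, i.e.\ point-transitivity.

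With only point-transitivity in hand, Step~2 cannot be launched: the Perin/Cameron--Kantor type classification you invoke applies to groups transitive on $m$-subspaces for $2\leq m\leq d-2$, a hypothesis you never establish. The groups of $\PGaL_d(q)$ transitive on points form a much larger class (Hering's theorem), including infinite families such as subgroups of $\GaL_1(q^d)$, extension-field subgroups, and symplectic groups, none of which contain $\PSL_d(q)$; indeed most known subspace designs are point-transitive, so point-transitivity alone cannot force triviality. The paper bridges this gap differently: block-transitivity gives $|\B|\mid|\Aut(\D)|$, and the explicit formula for $|\B|$ (after reducing to $k\leq d/2$ by duality) forces the product of primitive parts $\Phi_d^*(q)\Phi_{d-1}^*(q)$ to divide $|G|$ (Lemma~\ref{divislemma}); this arithmetic condition is what allows the Bamberg--Penttila classification (Theorem~\ref{theoremBP}) to be applied and its cases eliminated one by one. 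Your endgame (a single orbit on $k$-subspaces forces $\B={V\choose k}_q$) and your expectation of computer work in hard small cases are both in the right spirit, but the key step converting block-transitivity into a usable group-theoretic hypothesis is missing, and the classification you propose to use is not the one that the available hypothesis supports.
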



Theorem \ref{maintheorem} also rules out the existence of subspace designs having stronger forms of symmetry that are often studied for block designs, such flag-transitivity, that is, transitivity on incident point-block pairs. However, most of the known subspace designs (in particular, those mentioned above that are invariant under a Singer cycle) do satisfy the weaker symmetry condition that they are point-transitive, that is, their automorphism groups act transitively on the $1$-dimensional subspaces of the underlying vector space.

The proof of Theorem~\ref{maintheorem} relies on work of Bamberg and Penttila \cite[Theorem~3.1]{bamberg2008overgroups}, who determined all linear groups having orders with certain divisors. Their result in turn relies on Guralnick et al.~\cite{guralnick1999linear}, who make use of the Aschbacher classification of maximal subgroups of classical groups \cite{aschbacher1984maximal} and the classification of finite simple groups. We consider in Section~\ref{mainproof} each of the cases determined by \cite[Theorem~3.1]{bamberg2008overgroups} in order to obtain Theorem~\ref{maintheorem}. The majority of cases involve a simple analysis, with the exception of those treated separately in Section~\ref{smallcases}. In particular, the case treated in Lemma~\ref{jesseslemma} involves an exhaustive computer search.

In Section~\ref{subdesignprelim} we introduce notation and preliminary results for subspace designs. In Section~\ref{primitivesect} we discuss the concept of a primitive divisor and set up the application of \cite{bamberg2008overgroups}. Section~\ref{smallcases} deals with several specific cases. Finally, in Section~\ref{mainproof} we prove Theorem~\ref{maintheorem}.

\subsection{Subspace designs}\label{subdesignprelim}

In analogy with the binomial coefficient ${n \choose k}$ we define the \emph{$q$-binomial coefficient},
\[
 {d \choose k}_q=\frac{(q^d-1)\cdots (q^{d-k+1}-1)}{(q^k-1)\cdots(q-1)}.
\]
The $q$-binomial coefficient is also sometimes referred to as the \emph{Gaussian coefficient}.
Similarly, this time in analogy with the set ${N \choose k}$ of all $k$-subsets of a set $N$, we denote
the set of all $k$-dimensional subspaces of a vector space $V$ over $\F_q$ by ${V \choose k}_q$.

\begin{definition}\label{subspacedesdef}
 Given integers $d,k,t$, and $\lambda$, with $1\leq t<k\leq d-1$, a \emph{$t$-$(d,k,\lambda)_q$ design} (or briefly a \emph{$q$-design} or \emph{subspace design}) is a pair $\D=(V,\B)$, where $V=\F_q^d$ and $\B\sset {V \choose k}_q$, such that each element of ${V \choose t}_q$ is a subspace of precisely $\lambda$ elements of $\B$.
\end{definition}

 A \emph{$q$-Steiner system} is a $t$-$(d,k,\lambda)_q$ design with $\lambda=1$. A subspace design with $t=1$ is known as a \emph{$k$-covering} or if additionally $\lambda=1$ a \emph{$k$-spread}. Since $k$-coverings and $k$-spreads have been studied in their own right we always assume here that $t\geq 2$. Note that for a (classical) $t$-$(d,k,\lambda)$ design the case $t \geq 2$ also attracts the most interest. The $t$-$(d,k,\lambda)_q$ design with $\B= {V \choose k}_q$ is the \emph{trivial} design. The number of blocks in a $t$-$(d,k,\lambda)_q$ design $\D=(V,\B)$ is given by
 \[
  |\B|= \lambda  \frac{{d \choose t}_q }{ {k \choose t}_q }=\lambda\frac{(q^d-1) \cdots (q^{d-t+1}-1)}{(q^k-1) \cdots(q^{k-t+1}-1)}.
 \]
 
 Given a non-singular sesquilinear form $f$ defined on $V$ we obtain the \emph{dual design} $\D^\perp=(V,\B^\perp)$ of a subspace design $\D=(V,\B)$, where $\B^\perp=\{U^\perp\mid U\in \B\}$ and for each $U\leq V$ we have $U^\perp=\{v\in V\mid f(u,v)=0, \forall u\in U\}$. For more details on duality, see \cite[Section~2.1]{braun2018q}. The dual of a subspace design is indeed again a subspace design by \cite[Lemma~4.2]{suzuki1990inequalities}. A subspace design such that $\D=\D^\perp$ is called \emph{self-dual}.

Let $V\cong \F_q^d$. The \emph{Grassmann graph} $J_q(d,k)$ is the graph having vertex set ${V \choose k}_q$, where two vertices are adjacent precisely when they intersect in a $(k-1)$-dimensional subspace (see \cite[Section~9.3]{brouwer}). The automorphism group of $J_q(d,k)$ satisfies
\begin{itemize}
 \item $\Aut(J_q(d,k))\cong \PGaL_d(q)$ when $1<k<d$ and $2k\neq d$,
 \item $\Aut(J_q(d,k))\cong \PGaL_d(q) . C_2$ when $2k=d$ (cf. \cite[Theorem 9.3.1]{brouwer}).
\end{itemize}
If $\D=(V,\B)$ is a $t$-$(d,k,\lambda)_q$ design, then $\B$ is a subset of the vertex set of $J_q(d,k)$, which leads us to state Definition~\ref{autgroupdef} in its present form. Note that this differs from the convention used by some authors, where the automorphism group of $\D$ is defined to be a subgroup of the automorphism group of the subspace lattice, that is, a subgroup of $\PGaL_d(q)$ (see the discussion in \cite[Section~2.1]{braun2018q}). In particular, viewing $\Aut(\D)$ as a subgroup of $\Aut(J_q(d,k))$ allows us to consider outer automorphisms of $\PGaL_d(q)$ (anti-automorphisms of the subspace lattice) to act as automorphisms of $\D$.  Our approach is equivalent to considering the relationships of subspaces via incidence (symmetrised inclusion), rather than inclusion; this being the weakest possible structure to impose on the set of subspaces without losing the overall subspace structure. It follows that there can be no further automorphisms than those in $\Aut(J_q(d,k))$ that preserve the desired structure. Note that Definition~\ref{autgroupdef} implies that when $\D$ is not self-dual, in particular if $d\neq 2k$, then we may assume the automorphism group of $\D$ is a subgroup of $\PGaL_d(q)$, whilst if $d=2k$ we must consider the larger group $\PGaL_d(q) . C_2$, where the group $C_2$ is generated by a duality.

\begin{definition}\label{autgroupdef}
 Let $\D=(V,\B)$ be a $t$-$(d,k,\lambda)_q$ design. The automorphism group $\Aut(\D)$ of $\D$ is defined to be the setwise stabiliser of $\B$ inside the automorphism group of the Grassmann graph $J_q(d,k)$. Moreover, $\D$ is called \emph{block-transitive} if $\Aut(\D)$ acts transitively on $\B$.
\end{definition}

The following result shows that the dual of a block-transitive design is again block-transitive and allows us to restrict our attention to the case where $k\leq d/2$ in Section~\ref{mainproof}. 

\begin{lemma}\label{dualdesignlemma}
 Let $f$ be a non-singular sesquilinear form on $V$ and $\D=(V,\B)$ be a block-transitive $t$-$(d,k,\lambda)_q$ design. Then the dual $\D^\perp$ of $\D$ is a block-transitive $t$-$(d,d-k,\lambda')_q$ design, where $\lambda'=\lambda {d-t \choose k}_q / {d-t \choose k-t}_q$.
\end{lemma}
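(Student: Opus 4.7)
The plan is to transfer the action via the polarity $\delta\colon U\mapsto U^\perp$ determined by $f$. Since $f$ is non-singular, $\delta$ is an involution on the subspace lattice of $V$. Using $(U+W)^\perp=U^\perp\cap W^\perp$, one checks that $\dim(U\cap W)=k-1$ if and only if $\dim(U^\perp\cap W^\perp)=d-k-1$, so $\delta$ induces a graph isomorphism $J_q(d,k)\to J_q(d,d-k)$ and hence a bijection $g\mapsto g^\delta:=\delta g\delta^{-1}$ between $\Aut(J_q(d,k))$ and $\Aut(J_q(d,d-k))$ (an inner automorphism in the self-dual case $d=2k$).

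For each $g\in\Aut(\D)$, the map $g^\delta$ stabilises $\B^\perp=\delta(\B)$ setwise, so by Definition~\ref{autgroupdef} it belongs to $\Aut(\D^\perp)$. Moreover, if $U^\perp,W^\perp\in\B^\perp$ and $g\in\Aut(\D)$ satisfies $gU=W$, then
\[
g^\delta(U^\perp)\;=\;\delta g\delta^{-1}(U^\perp)\;=\;\delta(gU)\;=\;\delta(W)\;=\;W^\perp,
\]
so the block-transitive action of $\Aut(\D)$ on $\B$ transports to a block-transitive action of its image on $\B^\perp$.

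That $\D^\perp$ is a $t$-design is already given by \cite[Lemma~4.2]{suzuki1990inequalities}, so only the value of $\lambda'$ remains. Since $\delta$ is a bijection, $|\B^\perp|=|\B|$; applying the block-count formula from Section~\ref{subdesignprelim} on each side gives
\[
\lambda'\,{d\choose t}_q\Big/{d-k\choose t}_q\;=\;\lambda\,{d\choose t}_q\Big/{k\choose t}_q,
\]
i.e.\ $\lambda'=\lambda\,{d-k\choose t}_q/{k\choose t}_q$. Writing both $q$-binomials as ratios of products $(q^n-1)\cdots(q-1)$ and cancelling the common factor $(q^{d-t}-1)\cdots(q^{d-k+1}-1)$ in the numerator and denominator of ${d-t\choose k}_q/{d-t\choose k-t}_q$, one verifies directly that $\lambda\,{d-k\choose t}_q/{k\choose t}_q=\lambda\,{d-t\choose k}_q/{d-t\choose k-t}_q$, which is the value claimed. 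The only non-formal step is the check that $\delta$ preserves Grassmann adjacency; after that the argument reduces to conjugation and a $q$-binomial identity.
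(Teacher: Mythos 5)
Your proof is correct and follows essentially the same route as the paper's: cite Suzuki for the fact that $\D^\perp$ is a $t$-design, then conjugate $\Aut(\D)$ by the duality $U\mapsto U^\perp$ to obtain a permutationally isomorphic action on $\B^\perp$, which transports block-transitivity. The extra details you supply --- the check that the polarity preserves Grassmann adjacency and the $q$-binomial identity recovering $\lambda'$ from $|\B^\perp|=|\B|$ --- are both correct and merely make explicit what the paper delegates to its references.
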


\begin{proof}
 By \cite[Lemma~4.2]{suzuki1990inequalities}, we have that $\D^\perp$ is a $t$-$(n,n-k,\lambda')_q$ design. Let $G=\Aut(\D)$. Now, the dual map $U\mapsto U^\perp$ induces a bijection between ${V\choose k}_q$ and ${V\choose {d-k}}_q$ and a corresponding automorphism $\tau$ of $\PGaL_d(q)$ which together define a permutational isomorphism between the action of $G$ on $\B$ and the conjugate group $G^\tau$ acting on $\B^\perp$. It thus automatically follows that $\D$ is block-transitive if and only if $\D^\perp$ is block-transitive.
\end{proof}

The next result is simply a special case of \cite[Lemma~2.1]{suzuki1990inequalities} and allows us to assume that $t=2$ in Section~\ref{mainproof}.

\begin{lemma}\label{twodesigncor}
 If $\D=(V,\B)$ is a $t$-$(d,k,\lambda)_q$ design, for some $t\geq 2$, then $\D$ is also a $2$-$(d,k,\lambda_2)_q$ design, where
\[
 \lambda_2=\lambda\frac{ {d-2\choose t-2}_q }{ {k-2\choose t-2}_q }
\]
is an integer, and
\[
 |\B|=\lambda\frac{ {d\choose t}_q }{ {k\choose t}_q }=\lambda_2\frac{ {d\choose 2}_q }{ {k\choose 2}_q }=\lambda_2\frac{(q^d-1)(q^{d-1}-1)}{(q^k-1)(q^{k-1}-1)}.
\]
\end{lemma}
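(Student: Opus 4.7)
The plan is a standard double-counting argument, applied in the $q$-analogue setting. Fix an arbitrary $2$-dimensional subspace $W\leq V$ and count in two ways the number of pairs $(T,B)$ with $W\leq T\leq B$, where $T\in\binom{V}{t}_q$ and $B\in\B$.

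First I would count by choosing $T$ and then $B$. The number of $t$-dimensional subspaces $T$ of $V$ containing the fixed $2$-dimensional subspace $W$ equals the number of $(t-2)$-dimensional subspaces of the $(d-2)$-dimensional quotient $V/W$, namely $\binom{d-2}{t-2}_q$. For each such $T$, the $t$-design hypothesis ensures exactly $\lambda$ blocks contain $T$. Counting in the other order, if $\lambda_2(W)$ denotes the number of blocks $B\in\B$ with $W\leq B$, then for each such $B$ the number of $t$-subspaces $T$ with $W\leq T\leq B$ is $\binom{k-2}{t-2}_q$, by the analogous quotient argument inside $B/W$. Equating the two counts gives
\[
 \lambda_2(W)\,\binom{k-2}{t-2}_q = \lambda\,\binom{d-2}{t-2}_q,
\]
so $\lambda_2(W)$ does not depend on the choice of $W$ and equals the claimed value of $\lambda_2$. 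Integrality of $\lambda_2$ is immediate, since it is a cardinality.

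For the block-count formula, I would apply the same double-counting idea to pairs $(T,B)$ with $T\leq B$, summing over all $t$-subspaces $T$ of $V$: this gives $|\B|\,\binom{k}{t}_q = \lambda\,\binom{d}{t}_q$, which is the first equality claimed. To pass to the form involving $\lambda_2$, I would invoke the elementary $q$-binomial identity
\[
 \binom{n}{t}_q\binom{t}{2}_q = \binom{n}{2}_q\binom{n-2}{t-2}_q,
\]
which is verified by counting pairs $(W,U)$ with $W\leq U\leq V$, $\dim W=2$, $\dim U=t$ in a $n$-dimensional space; applying this with $n=d$ and $n=k$ and substituting yields
\[
 \lambda\,\frac{\binom{d}{t}_q}{\binom{k}{t}_q}
 = \lambda\,\frac{\binom{d-2}{t-2}_q}{\binom{k-2}{t-2}_q}\cdot\frac{\binom{d}{2}_q}{\binom{k}{2}_q}
 = \lambda_2\,\frac{\binom{d}{2}_q}{\binom{k}{2}_q}.
\]
The final expansion as a ratio of products of $q$-powers is then just the definition of the $q$-binomial coefficients with the common factors $(q^{k-1}-1)\cdots$ cancelled.

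There is no real obstacle here: the only point that requires any care is the subspace-counting inside quotients (both of $V$ by $W$ and of $B$ by $W$), which is routine once one notes that passing to a quotient establishes a bijection between subspaces containing $W$ and subspaces of the quotient. Everything else is formal manipulation with $q$-binomials, and integrality comes for free from the combinatorial interpretation of $\lambda_2$.
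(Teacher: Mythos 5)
Your argument is correct. The paper itself gives no proof of this lemma, simply citing it as a special case of Suzuki's Lemma~2.1 in \cite{suzuki1990inequalities}; your double count of pairs $(T,B)$ with $W\leq T\leq B$, together with the $q$-binomial identity ${n\choose t}_q{t\choose 2}_q={n\choose 2}_q{n-2\choose t-2}_q$, is exactly the standard self-contained proof of that result, and all the steps (the quotient-space bijections, the independence of $\lambda_2(W)$ from $W$, and the final cancellation) check out.
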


\subsection{Primitive divisors and linear groups}\label{primitivesect}

A divisor $r$ of $q^e-1$ that is coprime to each $q^i-1$ for $i<e$ is said to be a \emph{primitive divisor}. The \emph{primitive part} of $q^e-1$ is the largest primitive divisor, and we denote the primitive part of $q^e-1$ by $\Phi_e^*(q)$. Note that, since $\Phi_1^*(q)=q-1$ is even when $q$ is odd and $q^e-1$ itself is odd if $q$ is even, we have that $\Phi_{e}^*(q)$ is always odd. We then have the following.

\begin{lemma}\label{divislemma}
 Let $\D=(V,\B)$ be a block-transitive $2$-$(d,k,\lambda)_q$ design with $k\leq d/2$ and let $G=\Aut(\D)\cap \PGaL_d(q)$. Then $\Phi_d^*(q)\cdot\Phi_{d-1}^*(q)$ divides the order of $G$.
\end{lemma}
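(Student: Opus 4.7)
The plan is to combine the orbit–stabiliser theorem with the coprimality properties of primitive parts. By block-transitivity of $\Aut(\D)$ on $\B$, the orbit–stabiliser theorem gives that $|\B|$ divides $|\Aut(\D)|$. Using Lemma~\ref{twodesigncor} (with $t=2$), we have
\[
 |\B|=\lambda\cdot\frac{(q^d-1)(q^{d-1}-1)}{(q^k-1)(q^{k-1}-1)}.
\]
The strategy is to show that $\Phi_d^*(q)\cdot\Phi_{d-1}^*(q)$ already divides this number, and then to transfer that divisibility from $|\Aut(\D)|$ down to $|G|$.

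For the divisibility by the product of primitive parts, I would argue as follows. By definition $\Phi_d^*(q)$ divides $q^d-1$ and is coprime to every $q^i-1$ with $i<d$, and similarly $\Phi_{d-1}^*(q)$ divides $q^{d-1}-1$ and is coprime to every $q^i-1$ with $i<d-1$. Since $t\geq 2$ forces $k\geq 3$, combined with $k\leq d/2$ this gives $k\leq d-2$, so both $k$ and $k-1$ are strictly less than $d-1$. Hence both primitive parts are coprime to $(q^k-1)(q^{k-1}-1)$, and they are coprime to one another (because $\Phi_d^*(q)$ is coprime to $q^{d-1}-1$). Their product therefore divides $(q^d-1)(q^{d-1}-1)$ while being coprime to the denominator $(q^k-1)(q^{k-1}-1)$, so it divides the quotient, and hence divides $|\B|$ (the factor $\lambda$ only helps).

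It remains to move from $\Aut(\D)$ to $G=\Aut(\D)\cap\PGaL_d(q)$. As noted in the discussion before Definition~\ref{autgroupdef}, $\Aut(\D)$ is contained in $\PGaL_d(q).C_2$, so $G$ has index at most $2$ in $\Aut(\D)$. Consequently the $G$-orbits on $\B$ all have common size equal to $|\B|$ or $|\B|/2$, and in particular $|\B|/2$ divides $|G|$. The only potential obstacle is a loss of a factor of $2$ in this step, but the remark immediately preceding the lemma records that $\Phi_e^*(q)$ is always odd. Hence $\Phi_d^*(q)\cdot\Phi_{d-1}^*(q)$ is odd and, dividing $|\B|$, must in fact divide $|\B|/2$ whenever the latter is an integer. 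Therefore $\Phi_d^*(q)\cdot\Phi_{d-1}^*(q)$ divides $|G|$, as required.
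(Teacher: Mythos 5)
Your proposal is correct and follows essentially the same route as the paper's own proof: block-transitivity gives $|\B|\mid|\Aut(\D)|$, the formula for $|\B|$ together with $k<d-1$ shows the (mutually coprime) primitive parts are coprime to the denominator and hence divide $|\B|$, and the oddness of $\Phi_e^*(q)$ absorbs the possible index-$2$ loss in passing from $\Aut(\D)$ to $G$. The only cosmetic difference is that the paper phrases the last step as ``$|\B|$ divides $2|G|$ and an odd divisor of $2|G|$ divides $|G|$'' rather than via orbit sizes, but the content is identical.
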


\begin{proof}
 First, note that either $G$ is actually equal to $\Aut(\D)$, or $\D$ is self-dual and $G$ has index $2$ inside $\Aut(\D)$. Since $\Aut(\D)$ acts transitively on $\B$, it follows that $|\B|$ divides $|\Aut(\D)|$, and hence also divides $2|G|$. By definition we have that
 \[
  \gcd\left(\Phi_d^*(q),\Phi_{d-1}^*(q)\right)=1.
 \]
 Moreover, $\Phi_i^*(q)$ is always odd. Thus, it suffices for us to prove that $\Phi_i^*(q)$ divides $2|G|$ for each $i=d,d-1$. Now
 \[
  |\B|=\lambda\frac{(q^d-1)(q^{d-1}-1)}{(q^k-1)(q^{k-1}-1)}.
 \]
 Since $2<k$ and $k\leq d/2$, we have that $k<d-1$. Thus, for $i=d,d-1$, we have that $\Phi_i^*(q)$ is coprime to each of $q^k-1$ and $q^{k-1}-1$. Hence $\Phi_d^*(q)\Phi_{d-1}^*(q)$ divides $|\B|$, and therefore $|G|$, as required.
\end{proof}

The significance of Lemma~\ref{divislemma} is that it allows \cite[Theorem~3.1]{bamberg2008overgroups} to be applied in Section~\ref{mainproof}. Note that the original statement of \cite[Theorem~3.1]{bamberg2008overgroups} spans $3$ pages and includes tables of viable parameters. We provide an abridged version of the theorem, and refer the reader to \cite{bamberg2008overgroups} for finer details. Note that we write $V_d(q)\cong \F_q^d$.

\begin{remark}\label{extfielderror}
 Before stating the abridged \cite[Theorem~3.1]{bamberg2008overgroups}, we note that there is a very small error in the statement of the extension field case. In particular, case (a) of the extension field case does not require that $b$ be a non-trivial divisor of $\gcd(d,e)$; see \cite[Example~2.4]{guralnick1999linear} for clarification regarding the conditions in this case.
\end{remark}

\begin{theorem}[{\cite[Theorem 3.1]{bamberg2008overgroups}}]\label{theoremBP}
Let $q=p^f$ where $p$ is a prime, let $d$ and $e$ be integers greater than $2$ and satisfying $d/2 < e \leqslant d$. If a subgroup $G$ of $GL_d(q)$ has order divisible by $\Phi^*_{ef}(p)$, and $\Phi^*_{ef}(p)>1$, then one of the following occurs.
\begin{itemize}
	\item[] {\bf Classical Examples:} $G$ preserves a nondegenerate sesquilinear form on $V_d(q)$ and one of the following holds: (a) $\SL_d(q) \trianglelefteq G$; (b) $\Sp_d(q) \trianglelefteq G$; (c) $q$ is a square, $\SU_d(q) \trianglelefteq G$, and $e$ is odd; (d) $\Omega_d^\epsilon (q) \trianglelefteq G$ where $\epsilon = \pm$ for $d$ even, and $\epsilon = \circ$ when $dq$ is odd.
	\item[] {\bf Reducible Examples:} $G$ fixes a subspace or quotient space $U$ of $V_d(q)$ and $\dim(U)=m\geqslant e$. So $G \leqslant q^{m(d-m)} \cdot (\GL_m(q)\times GL_{d-m}(q))$ and $\Phi^*_{ef}(p)$ divides $|G^U|$.
	\item[] {\bf Imprimitive Examples:} Here $q=p$, $\Phi^*_e(p) = e+1$, and $G$ preserves  a direct sum decomposition $V= U_1 \oplus \cdots \oplus U_d$, where each $U_i$ has dimension $1$. Moreover, $G \leqslant \GL_1(q) \wr S_d$ in product action, and $G$ induces a primitive group on the factors $\{U_1, \ldots, U_d\}$. There are a finite number of possible values that $q$, $e$, and $d$ take.
	\item[] {\bf Extension Field Examples:} Here we have that there is a non-trivial divisor $b$ of $d$, such that $G$ preserves on $V_d(q)$ a field extension structure of a vector space $V_{d/b}(q^b)$. Therefore $G \leqslant \GaL_{d/b}(q^b)$. Two subcases occur, according to whether $\Phi^*_{ef}(p)$ is coprime to $b$ or not. 
	\item[] {\bf Symplectic Type Examples:} Here $q=p$, $\Phi^*_e(p) = e+1$, and $G$ normalises an extraspecial $2$-group. Specifically, we have one of the following: (a) $p=3$, $e=d=4$, and $G \leqslant (2_{-}^{1+4} \cdot O_4^-(2)) \circ 2$; (b) $p=3$, $d=8$, $e=6$ and $G \leqslant (2_{+}^{1+6} \cdot O_6^+(2)) \circ 2$; (c) $p=5$, $d=8$, $e=6$ and either $G \leqslant ((4 \circ 2^{1+6}) \cdot \Sp_6(2))\circ 4)$ or $G \leqslant (2_+^{1+6} \cdot O_6^+(2)) \circ 4$.
	\item[] {\bf Nearly Simple Examples:} In this case, $S \leqslant \overline{G} \leqslant \Aut(S)$, where $S$ is a finite nonabelian simple group. The following subcases occur: (a) the alternating group case; (b) the sporadic simple group case; (c) the cross-characteristic case; (d) the natural-characterisic case. In each subcase, possible parameters belong to a finite list of values.
\end{itemize}
\end{theorem}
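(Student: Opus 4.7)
The plan is to combine the Aschbacher classification of maximal subgroups of finite classical groups with the classification of finite simple groups, exhausting the possibilities for a subgroup $G\leq \GL_d(q)$ whose order is divisible by the primitive part $\Phi^*_{ef}(p)$. The hypothesis $d/2<e\leq d$ together with $\Phi^*_{ef}(p)>1$ ensures that any prime $r$ dividing $\Phi^*_{ef}(p)$ has $p$ of multiplicative order exactly $ef$ modulo $r$, and satisfies $r\equiv 1\pmod{ef}$. Such ``large'' primitive primes impose severe arithmetic constraints on any group whose order they divide, in particular ruling out most geometric configurations on dimension grounds alone.

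First I would reduce to the case that $G$ is contained in a maximal subgroup $M$ of $\GL_d(q)$, since otherwise $G$ already contains a classical quasi-simple normal subgroup, producing the \emph{Classical Examples}. Aschbacher's theorem partitions $M$ into geometric classes $\C_1,\ldots,\C_8$ together with an almost-simple class $\S$. For each geometric class I would apply a primitive-divisor test: $\C_1$ (reducible) forces an invariant subspace of dimension at least $e$ and yields the \emph{Reducible Examples}; $\C_2$ (imprimitive, of shape $\GL_1(q)\wr S_d$) can only absorb $\Phi^*_{ef}(p)$ through the $S_d$ factor, which pins $q$, $e$ and $d$ to a very short list via $\Phi^*_e(p)=e+1$; $\C_3$ produces the \emph{Extension Field Examples}, with the two subcases distinguished by whether $\Phi^*_{ef}(p)$ is coprime to the extension degree $b$; the tensor and subfield classes $\C_4,\C_5,\C_7$ are either absorbed into the classical list or eliminated by direct order comparisons; and $\C_6$ (normalisers of symplectic-type groups) survives only for the few tabulated $(p,e,d)$ with $q=p$.

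The main obstacle is the almost-simple class $\S$. Here I would invoke the CFSG and split according to the four subcases listed in the statement. For each simple $S$ with $S\trianglelefteq \overline{G}\trianglelefteq \Aut(S)$, I would use known lower bounds on the minimal dimension of a faithful projective $\F_q$-representation of $S$ -- Landazuri--Seitz style bounds in the cross-characteristic case, and dominant weight or Weyl module arguments in the natural-characteristic case -- to force $d$ into a narrow range, and then check compatibility with $\Phi^*_{ef}(p)\mid |S|$ under the constraint $e>d/2$. Because these minimal faithful dimensions grow rapidly with the rank of $S$, only finitely many sporadic, alternating, and small-rank Lie-type candidates can occur, giving the finite parameter lists claimed. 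This is essentially the route pursued by Guralnick, Penttila, Praeger and Saxl in \cite{guralnick1999linear}, from which Bamberg and Penttila derive Theorem~\ref{theoremBP} by tightening the bookkeeping in the geometric cases -- with the minor slip in the $\C_3$ tabulation flagged in Remark~\ref{extfielderror}.
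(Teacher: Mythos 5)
The paper does not prove this statement at all: it is an abridged quotation of \cite[Theorem~3.1]{bamberg2008overgroups}, imported as an external tool, and your outline (reduction via Aschbacher's classes, primitive-prime-divisor arithmetic in each geometric class, CFSG plus Landazuri--Seitz-type dimension bounds in the almost-simple class) accurately matches the strategy of the cited proof in Bamberg--Penttila and Guralnick--Penttila--Praeger--Saxl. Be aware, though, that what you have written is a roadmap rather than a proof -- the substance of such a classification lies entirely in the detailed case-by-case verification, which neither you nor this paper carries out.
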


\section{Special cases}\label{smallcases}

In the next section we deal with some special cases that either do not satisfy the conditions required in Section~\ref{mainproof} in order to apply Theorem \ref{theoremBP}, or otherwise require particular attention.

\begin{lemma}\label{orderofGsmallcase}
 Suppose $\D=(V,\B)$ is a block-transitive $2$-$(6,3,\lambda)_2$ design, for some $\lambda\geq 1$, and let $G\cong \Aut(\D)\cap\PGaL_6(2)$. Then both the size of $\B$ and the order of $G$ are divisible by $93$.
\end{lemma}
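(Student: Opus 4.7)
The plan is to bypass Lemma~\ref{divislemma} altogether, since its conclusion is too weak in this setting. Indeed, because $(q,d) = (2,6)$ is the Zsygmondy exception, we have $\Phi_6^*(2) = 1$ (every prime divisor of $2^6 - 1 = 63 = 3^2\cdot 7$ already divides $2^2-1$ or $2^3-1$), so Lemma~\ref{divislemma} only yields $31 = \Phi_5^*(2) \mid |G|$. The missing factor of $3$ must be recovered by a direct block count, which turns out to give the entire factor of $93$ ``for free''.

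First, I would apply the formula from Lemma~\ref{twodesigncor} with $q=2$, $d=6$, $k=3$, $t=2$ to compute
\[
 |\B| = \lambda\,\frac{(2^6-1)(2^5-1)}{(2^3-1)(2^2-1)} = \lambda\,\frac{63\cdot 31}{7\cdot 3} = 93\lambda,
\]
so $93$ divides $|\B|$ immediately (for any $\lambda \geq 1$). This disposes of the first half of the claim.

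Next, to obtain $93 \mid |G|$, I would invoke block-transitivity: since $\Aut(\D)$ acts transitively on $\B$, the orbit-stabiliser theorem yields $|\B| \mid |\Aut(\D)|$. Because $d = 2k = 6$, the group $\Aut(J_2(6,3))$ is $\PGaL_6(2).C_2$, and so $G = \Aut(\D)\cap \PGaL_6(2)$ has index at most $2$ in $\Aut(\D)$ (exactly as observed at the start of the proof of Lemma~\ref{divislemma}). Therefore $93\lambda = |\B|$ divides $2|G|$. Since $\gcd(93,2) = 1$, it follows that $93$ divides $|G|$, completing the lemma.

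There is no real obstacle here: the whole content is the numerical observation that the denominator $(2^3-1)(2^2-1) = 21$ divides the numerator $63\cdot 31$ cleanly, producing $93$ directly from the blocks-count. The reason this case warrants its own lemma is precisely that the primitive-divisor machinery of Section~\ref{primitivesect} collapses when $\Phi_6^*(2) = 1$, so the factor of $3$ in $93 = 3\cdot 31$ cannot be obtained from $\Phi_d^*(q)\Phi_{d-1}^*(q)$ and must be read off the block-count instead.
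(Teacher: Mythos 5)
Your proof is correct and follows essentially the same route as the paper: compute $|\B| = 93\lambda$ directly from the block-count formula, use block-transitivity to get $|\B| \mid |\Aut(\D)| \mid 2|G|$ (since $G$ has index at most $2$ in $\Aut(\D)$), and conclude $93 \mid |G|$ because $93$ is odd. The surrounding discussion of the Zsigmondy exception is accurate motivation but not needed for the argument itself.
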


\begin{proof}
 Note that if $\D$ is self-dual, then $G$ has index $2$ in $\Aut(\D)$, and otherwise $G=\Aut(\D)$. Since $\D$ is block-transitive, it follows that $|\B|$ divides $|\Aut(\D)|$. Hence, $|\B|$ divides $2|G|$. Now,
 \[
  |\B|=\frac{63\cdot 31}{7\cdot 3}\lambda=3\cdot 31\cdot \lambda.
 \]
 Thus $3\cdot 31=93$ divides the order of $G$.
\end{proof}

\begin{lemma}\label{pointhypstab}
 Suppose $\D=(V,\B)$ is a block-transitive $2$-$(6,3,\lambda)_2$ design, for some $\lambda\geq 1$, and let $G\cong \Aut(\D)\cap\PGaL_6(2)$. Then $G$ does not stabilise a $5$-dimensional subspace of $V$. 
\end{lemma}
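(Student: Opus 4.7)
The plan is to argue by contradiction and exploit the fact that block-transitivity together with $G$ stabilising $U$ forces every block of $\D$ to relate to $U$ in the same way. Since $\dim(U)=5$ and any block $W$ has $\dim(W)=3$, the intersection $W\cap U$ has dimension $\max(0,3+5-6)=2$ or $3$, with $\dim(W\cap U)=3$ exactly when $W\leq U$. Thus the family $\B$ is partitioned into $\{W\in\B:W\leq U\}$ and $\{W\in\B:\dim(W\cap U)=2\}$, and this partition is $G$-invariant. By block-transitivity, one of the two parts equals $\B$ and the other is empty.

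First I would dispose of the case where every block is contained in $U$. Pick any $2$-dimensional subspace $T$ of $V$ with $T\not\leq U$ (such $T$ exists since $U\neq V$). As $\D$ is a $2$-$(6,3,\lambda)_2$ design with $\lambda\geq1$, some block $W\in\B$ contains $T$, so $T\leq W\leq U$, contradicting $T\not\leq U$.

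Next I would handle the remaining case, where $\dim(W\cap U)=2$ for every $W\in\B$, by a double count of the incidence pairs
\[
\Pi=\bigl\{(S,W)\colon S\in\tbinom{U}{2}_2,\ W\in\B,\ S\leq W\bigr\}.
\]
Counting $\Pi$ by $S$: each $2$-dimensional $S\leq U$ lies in exactly $\lambda$ blocks, so $|\Pi|=\lambda\cdot\binom{5}{2}_2=155\lambda$. Counting $\Pi$ by $W$: since $\dim(W\cap U)=2$, the unique $2$-dimensional subspace of $W$ contained in $U$ is $W\cap U$, so each block contributes exactly one pair, giving $|\Pi|=|\B|=93\lambda$ by Lemma~\ref{orderofGsmallcase}. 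Equating yields $155\lambda=93\lambda$, forcing $\lambda=0$, a contradiction.

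Neither case is really an obstacle; the only thing to be careful about is verifying the intersection dimensions (the elementary dimension formula $\dim(W\cap U)\geq\dim(W)+\dim(U)-\dim(V)$ makes the trichotomy clean) and that $G$-invariance of the partition of $\B$ really does follow from $G$ fixing $U$ setwise, which in turn is immediate because $G\leq\PGaL_6(2)$ acts on the subspace lattice.
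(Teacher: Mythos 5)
Your counting strategy is sound and, where it applies, considerably more elementary than the paper's argument (which identifies the stabiliser of a $5$-space in $\SL_6(2)$ with $\AGL_5(2)$, uses the maximal subgroups of $\SL_5(2)$ to force $G$ to be $\SL_5(2)$ or $\AGL_5(2)$, and then computes orbit lengths on $3$-spaces). However, there is a genuine gap at your very first step: you assume $G$ acts transitively on $\B$, but block-transitivity here means that $\Aut(\D)$ is transitive on $\B$, and since $d=2k=6$ the group $\Aut(\D)$ sits inside $\Aut(J_2(6,3))\cong\PGaL_6(2).C_2$. If $\D$ is self-dual, then $G=\Aut(\D)\cap\PGaL_6(2)$ has index $2$ in $\Aut(\D)$ and $\B$ may split into two equal-sized $G$-orbits. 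A duality does not preserve the relation ``$W\leq U$'' (it sends $U$ to a $1$-space), so your partition of $\B$ is only $G$-invariant, not $\Aut(\D)$-invariant, and you cannot conclude that one part is empty: a priori one $G$-orbit could consist of blocks inside $U$ and the other of blocks meeting $U$ in a $2$-space.

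Fortunately, your double count closes this gap with almost no extra work. Write $\B_1=\{W\in\B: W\leq U\}$ and $\B_2=\B\setminus\B_1$. Counting $\Pi$ by $W$ in general gives $7|\B_1|+|\B_2|=155\lambda$, since a block contained in $U$ contributes ${3\choose 2}_2=7$ pairs rather than one, while $|\B_1|+|\B_2|=93\lambda$; subtracting yields $|\B_1|=31\lambda/3$. On the other hand $\B_1$ is a union of $G$-orbits, and $\B$ is either a single $G$-orbit or two of size $93\lambda/2$ each, so $|\B_1|\in\{0,\,93\lambda/2,\,93\lambda\}$; none of these equals $31\lambda/3$ for $\lambda\geq 1$. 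With this repair your proof is complete and substantially shorter than the one in the paper.
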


\begin{proof}
 Since $\Aut(\D)$ acts transitively on $\B$, and either $G=\Aut(\D)$ or $\D$ is self-dual and $G$ has index $2$ in $\Aut(\D)$, it follows that $\B$ is either a single $G$-orbit or the union of two equal-sized $G$-orbits. 
 Now, by Lemma~\ref{orderofGsmallcase}, $|\B|$ must be divisible by $93$. In the first case, the length of the single $G$-orbit must thus be divisible by $93$. In the latter case, the length of each $G$-orbit must be $|\B|/2$, and since $\gcd(93,2)=1$, it follows that the size of each $G$-orbit must also be divisible by $93$.
 
 Let $v_1,\ldots,v_6$ be a basis for $V$ and assume for a contradiction that $G$ stabilises $W=\langle v_2,\ldots,v_6\rangle$. Furthermore, let $H = N\rtimes K\cong\AGL_5(2)$ be the stabiliser of $W$ inside $\SL_6(2)\cong \PSL_6(2) \cong \PGaL_6(2)$. In particular, for each $w\in W$ we obtain an element $\eta_w\in N$ given by the linear transformation defined by $\eta_w:v_1\mapsto v_1+w$ and $\eta_w:v_i\mapsto v_i$ for each $i\neq 1$. Moreover, $K$ is given by the natural action of $\SL_5(2)$ on $W$ extended to all of $V$. Note that $K$ acts on $N$, and that action is transitive on the non-identity elements of $N$.

 Consider the projection $P$ of $G$ into $H/N\cong \SL_5(2)$ via the homomorphism $n\sigma\mapsto \sigma$ for $n\in N$ and $\sigma\in K$.  Since $93$ is coprime to the order of $N$, it follows that $93$ divides the order of $P$. By \cite[Table~8.24]{classiccolva}, there are no maximal subgroups of $\SL_5(2)$ having order divisible by $93$, and hence also no proper subgroups of $\SL_5(2)$ having order divisible by $93$. Therefore, $P\cong \SL_5(2)$. Since the Schur multiplier of $\SL_5(2)$ is trivial and there is only one conjugacy class of groups isomorphic to $\SL_5(2)$ inside $H$, we may assume that $G$ contains $K$. If $G\cap N$ is the trivial group then $G=K$. If $G\cap N$ is non-trivial then, since the action of $K$ is transitive on the non-identity elements of $N$, it follows that $G\cap N=N$ and $G=H$. We now consider the $G$-orbits on ${V \choose 3}_2$ for each possibility of $G$.
 
 Suppose $G=K$. Then ${W \choose 3}_2$ is one $G$-orbit. Note that if we consider $V$ to be the underlying vector space of $\pg_5(2)$ then $W$ is the underlying vector space of a subgeometry $\pg_4(2)$ and the points not in this subgeometry can be considered to form an affine space $\ag_5(2)$, with $N$ acting as the group of translations of this affine space. A $3$-dimensional subspace of $V$ that is not in ${W \choose 3}_2$ decomposes into a $2$-dimensional subspace $X$ of $W$ and a $2$-flat of $\ag_5(2)$ lying in the parallel class corresponding to $X$. Since $K$ is also transitive on ${W \choose 2}_2$, it follows that $K$ has $2^5/2^2=8$ orbits on the $2$-flats of $\ag_5(2)$, corresponding to a further $8$ orbits on ${V \choose 3}_2$. Each orbit has length has length $155$, which is not divisible by $93$, giving a contradiction.

 Now suppose $G=H$. Then there are just two orbits of $G$ on ${V \choose 3}_2$, the first being ${W \choose 3}_2$, with length $155$, and the second being the set of all $3$-dimensional subspaces that intersect $W$ in a $2$-dimensional subspace, with length $1240$.  Neither length is divisible by $93$, a contradiction.
\end{proof}

\begin{lemma}\label{nodesignnequals6}
 No non-trivial block-transitive $2$-$(d,k,\lambda)_q$ design exists with $q^d=2^6$. 
\end{lemma}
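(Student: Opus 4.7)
Since $q^d = 2^6$ and the subspace design conditions require $d > k > t \geq 2$, we must have $(q,d) = (2,6)$. By Lemma~\ref{twodesigncor} I may assume $t = 2$, and by Lemma~\ref{dualdesignlemma} I may further assume $k \leq d/2 = 3$, so $k = 3$. The task thus becomes to show that no non-trivial block-transitive $2$-$(6,3,\lambda)_2$ design exists, which is precisely the setting of the preparatory Lemmas~\ref{orderofGsmallcase} and \ref{pointhypstab}.

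Suppose $\D=(V,\B)$ is such a design and set $G = \Aut(\D) \cap \PGaL_6(2)$. A direct application of Lemma~\ref{divislemma} with $e = d$ fails because $2^6$ is the classical Zsygmondy exception, so $\Phi_6^*(2) = 1$. However, Lemma~\ref{orderofGsmallcase} gives that $93 = 3 \cdot 31$ divides $|G|$, so in particular $\Phi_5^*(2) = 31$ divides $|G|$. Since $d/2 = 3 < 5 \leq 6 = d$, Theorem~\ref{theoremBP} may be invoked with $e = 5$ and $f = 1$, yielding a finite list of structural possibilities for $G \leq \GL_6(2)$.

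The plan is then to rule out each case of Theorem~\ref{theoremBP} in turn. In the \emph{classical} case I would use that $\SL_6(2)$ acts transitively on the set ${V \choose 3}_2$ of $1395$ three-dimensional subspaces (forcing the trivial design), while for $\Sp_6(2)$ and $\Omega_6^\pm(2)$ a direct orbit computation shows that no proper union of orbits has cardinality $93\lambda$ for any valid $\lambda$. The \emph{reducible} case forces $G$ to fix a subspace or quotient of dimension at least $5$; a fixed $5$-dimensional subspace is directly excluded by Lemma~\ref{pointhypstab}, while a fixed $5$-dimensional quotient (equivalently, a fixed $1$-dimensional subspace) reduces to the same conclusion by applying Lemma~\ref{pointhypstab} to the dual design $\D^\perp$, which is again block-transitive by Lemma~\ref{dualdesignlemma}. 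The \emph{imprimitive} and \emph{symplectic type} cases are immediate since both require $\Phi_5^*(2) = e + 1 = 6$, contradicting $\Phi_5^*(2) = 31$. The \emph{extension field} case reduces to verifying that $31 \nmid |\GaL_{6/b}(2^b)|$ for each non-trivial divisor $b$ of $6$, which is a short calculation.

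The hard part will be the \emph{nearly simple} case, where $S \leq \overline{G} \leq \Aut(S)$ for a non-abelian simple group $S$. I would work through the restricted candidate list given in \cite{bamberg2008overgroups}, and for each $S$ use the known embeddings into $\PGL_6(2)$ together with orbit analysis on ${V \choose 3}_2$ to check whether any invariant block set of size $93\lambda$ can form a non-trivial $2$-design. For the most delicate candidates, I expect to invoke the exhaustive computer search provided by Lemma~\ref{jesseslemma} to finish off the remaining configurations.
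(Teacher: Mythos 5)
Your reduction to the $2$-$(6,3,\lambda)_2$ case and your use of Lemma~\ref{orderofGsmallcase} to get $93=3\cdot 31$ dividing $|G|$ match the paper, but from there you take a different and substantially heavier route, and the route is not actually completed. The paper does \emph{not} invoke Theorem~\ref{theoremBP} here at all (this case sits in the ``special cases'' section precisely to avoid that machinery): it simply observes that $G$ must lie in a maximal subgroup of $\PSL_6(2)$, consults \cite[Tables~8.24 and 8.25]{classiccolva} to see that the \emph{only} maximal subgroups of $\PSL_6(2)$ with order divisible by $31$ are the two parabolic classes $2^5{:}\GL_5(2)$ (stabilisers of a $1$-space or a $5$-space), and then finishes with Lemma~\ref{pointhypstab} and duality. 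That is a three-line argument once the table is cited, and it sidesteps every one of the case distinctions you set up.

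The genuine gap in your proposal is the nearly simple case: you say you ``would work through the restricted candidate list \ldots and for each $S$ use the known embeddings \ldots together with orbit analysis,'' but no candidate is identified, no embedding is examined, and no orbit computation is done. Since this is exactly the case where something could in principle survive, the proof is not complete without it. Worse, your proposed fallback --- ``invoke the exhaustive computer search provided by Lemma~\ref{jesseslemma}'' --- cannot work: that lemma concerns $2$-$(11,5,5)_2$ designs with automorphism group $\GaL_1(2^{11})$ and has no bearing whatsoever on subgroups of $\GL_6(2)$. (A smaller quibble: your classical case promises an ``orbit computation'' for $\Sp_6(2)$ and $\Omega_6^\pm(2)$, which is unnecessary --- $31$ does not divide $|\Sp_6(2)|=2^9\cdot 3^4\cdot 5\cdot 7$ or $|\GO_6^\pm(2)|$, so these are excluded before any orbits need to be counted; this is harmless but suggests the divisibility-by-$31$ filter was not pushed as far as it could be.) The cleanest repair is to apply that filter globally, as the paper does: every proper subgroup of $\PSL_6(2)$ of order divisible by $31$ lies in a point- or hyperplane-stabiliser, which collapses the classical, extension field, imprimitive, symplectic, and nearly simple cases simultaneously and leaves only the reducible case you already handle correctly via Lemma~\ref{pointhypstab} and Lemma~\ref{dualdesignlemma}.
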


\begin{proof}
 Suppose, for a contradiction, that such a subspace design $\D=(V,\B)$ exists. By definition we have that $k\geq 3$ and by Lemma~\ref{dualdesignlemma} we may assume that $k\leq d/2$. Since $q^d=2^6$, so that $d\leq 6$, it follows that $k=3$, $q=2$ and $d=6$. Let $G=\Aut(\D)\cap \PSL_6(2)$. By Lemma~\ref{orderofGsmallcase}, we then have that $3\cdot 31$ divides the order of $G$. 
 
 Now, $\D$ non-trivial implies that $\B$ is not the set of all $3$-dimensional subspaces of $V$, from which it follows that $G$ must be a proper subgroup of $\PGaL_6(2)\cong\PSL_6(2)$ and is thus contained in some maximal subgroup of $\PSL_6(2)$. By \cite[Tables~8.24 and 8.25]{classiccolva}, there are two conjugacy classes of such maximal subgroups that have order divisible by $31$, a representative of each class being isomorphic to $2^5:\GL_5(2)$ as the stabiliser of a $1$-dimensional subspace or a $5$-dimensional subspace of $V$, respectively. By Lemma~\ref{pointhypstab} there is no block-transitive design such that $G$ stabilises a $5$-dimensional subspace.
Moreover, if $G$ were to stabilise a $1$-dimensional subspace, then $\Aut(\D^\perp)\cap \PSL_6(2)$ would stabilise a $5$-dimensional subspace. However, by Lemma~\ref{dualdesignlemma}, $\D^\perp$ has the same parameters as $\D$, and hence there is no such design where $G$ stabilises a $1$-dimensional subspace.
\end{proof}

\begin{lemma}\label{nminus1is6}
 No non-trivial block-transitive $2$-$(d,k,\lambda)_q$ design exists with $q^{d-1}=2^6$.
\end{lemma}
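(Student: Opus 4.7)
The plan is to follow the template of Lemma~\ref{nodesignnequals6}: force the parameters into essentially a single triple $(q,d,k)$, use $|\B|$ to extract a large prime divisor of $|\Aut(\D)|$, apply the Aschbacher-type analysis of Theorem~\ref{theoremBP} to squeeze $\Aut(\D)$ into a Singer normaliser, and then derive a numerical contradiction.

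First, by Lemma~\ref{dualdesignlemma} I may assume $k \leq d/2$, and since $k \geq 3$ this forces $d \geq 6$, hence $d-1 \geq 5$. The equation $q^{d-1} = 2^6$ then has the unique solution $(q,d)=(2,7)$, and then $3 \leq k \leq d/2 = 7/2$ forces $k=3$. By Lemma~\ref{twodesigncor},
\[
 |\B| = \lambda\frac{(2^7-1)(2^6-1)}{(2^3-1)(2^2-1)} = 3 \cdot 127 \cdot \lambda.
\]
Since $d \neq 2k$, Definition~\ref{autgroupdef} gives $\Aut(\D) \leq \PGaL_7(2) = \PSL_7(2)$, so setting $G = \Aut(\D) \cap \PGaL_7(2)$ we actually have $G = \Aut(\D)$, and block-transitivity yields $|\B| \mid |G|$; in particular $127 \mid |G|$.

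Next, non-triviality of $\D$ makes $G$ a proper subgroup of $\PSL_7(2)$, and I would apply Theorem~\ref{theoremBP} with $d = e = 7$, $q = p = 2$, $f = 1$, and $\Phi_7^*(2) = 127$. The classical case returns $G = \SL_7(2) = \PSL_7(2)$, contradicting properness; the reducible case demands an invariant subspace of dimension $m \geq e = 7$, hence all of $V$; the imprimitive and symplectic-type cases both require $\Phi_7^*(2) = 8$, which fails; and the nearly simple cases for the triple $(d,e,q)=(7,7,2)$ are absent from the tables of \cite{bamberg2008overgroups}. The only remaining case is the extension field case with $b = 7$, forcing $G \leq \GaL_1(2^7) \cap \PSL_7(2) \cong C_{127} \rtimes C_7$.

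Consequently $|G|$ divides $7 \cdot 127$, whereas $|\B| = 3 \cdot 127 \cdot \lambda$ divides $|G|$, so $3\lambda$ divides $7$; as $\gcd(3,7) = 1$ and $\lambda \geq 1$, this is impossible. The step that most requires care is checking the absence of nearly simple examples at $(d,e,q) = (7,7,2)$; this should follow either directly from the Bamberg--Penttila tables, or equivalently from the observation (read off from classiccolva) that no $\mathcal{S}$-class maximal subgroup of $\PSL_7(2)$ has order divisible by $127$.
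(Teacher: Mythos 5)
Your proof is correct, and it reaches the same bottleneck as the paper --- the Singer normaliser $C_{127}\rtimes C_7$ of order $889$, killed by the factor $3$ in $|\B|=3\cdot 127\cdot\lambda$ --- but by a different mechanism. The paper, having reduced to $(q,d,k)=(2,7,3)$ exactly as you do, simply consults \cite[Tables~8.35 and 8.36]{classiccolva} to see that the Singer normaliser is the only maximal subgroup of $\PSL_7(2)$ with order divisible by $127$, and then observes that $3\nmid 7\cdot 127$. You instead run the case analysis of Theorem~\ref{theoremBP} with the single value $e=d=7$ and $\Phi_7^*(2)=127$. This is a legitimate application (the theorem needs only one admissible $e$, and $\GL_7(2)=\PSL_7(2)$ so there is no issue with centres), and your disposal of the classical, reducible, imprimitive and symplectic-type cases is right; note that for $d=7$, $q=2$ the classical subcases (b), (c), (d) are vacuous for parity/square reasons, which you pass over quickly but correctly. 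The one step that genuinely carries weight is the nearly simple case: since only one value of $e$ is available here, you cannot use the ``two values of $e$'' elimination of Lemma~\ref{nearlysimplecases} and must actually verify that no entry of the Bamberg--Penttila tables occurs for $(d,e,q)=(7,7,2)$. You flag this honestly, and your fallback --- that no $\mathcal{S}$-class maximal subgroup of $\PSL_7(2)$ has order divisible by $127$ --- is precisely the paper's own argument, so the gap closes either way. The trade-off: the paper's route is shorter and self-contained given the maximal subgroup tables; yours is more uniform with the machinery of Section~\ref{mainproof}, at the cost of a table check that the two-primitive-divisor argument elsewhere in the paper renders automatic.
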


\begin{proof}
 Suppose that such a subspace design $\D=(V,\B)$ exists and let $G=\Aut(\D)$. By definition we have that $k\geq 3$ and by Lemma~\ref{dualdesignlemma} we may assume that $k\leq d/2$. Since $d\leq 7$ we have that $k=3$, $q=2$ and $d=7$. Note that $d$ odd implies $\D$ is not self-dual and $G\leq\PGaL_7(2)\cong\PSL_7(2)$. If $G\cong\PSL_7(2)$ then $G$ acts transitively on ${V\choose 3}_2$, which implies that $\B={V\choose 3}_2$. However this is not the case, as $\D$ is non-trivial, and hence we deduce that $G$ is a proper subgroup of $\PSL_7(2)$. It follows from this that $G$ is contained in some maximal subgroup of $\PSL_7(2)$. Now, Lemma~\ref{twodesigncor} implies that $|\B|=3\cdot 127\cdot\lambda$. Since $\D$ is block-transitive it follows that $|\B|$ must divide $|G|$. By \cite[Tables~8.35 and 8.36]{classiccolva}, the only maximal subgroup of $\PSL_7(2)$ that has order divisible by $127$ is the normaliser of a Singer cycle. However, this group does not have order divisible by $3$, giving a contradiction. 
\end{proof}

\begin{lemma}\label{jesseslemma}
 There does not exist a $2$-$(11,5,5)_2$ design having automorphism group isomorphic to $\GaL_1(2^{11})$.
\end{lemma}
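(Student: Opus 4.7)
The plan is to exploit a numerical coincidence that forces the $G$-action on $\B$ to be regular, and then to run an exhaustive computer search over $G$-orbits on $5$-subspaces. By Lemma~\ref{twodesigncor}, a $2\text{-}(11,5,5)_2$ design has block count
\[
 |\B|= 5\cdot\frac{(2^{11}-1)(2^{10}-1)}{(2^{5}-1)(2^{4}-1)} = 5\cdot\frac{2047\cdot 1023}{31\cdot 15} = 11\cdot 2047 = 22517,
\]
whereas $|\GaL_1(2^{11})|=11(2^{11}-1)=22517$. Hence if $\Aut(\D)\cong G:=\GaL_1(2^{11})$ acts block-transitively on $\B$, the equality $|\B|=|G|$ forces the action to be regular, and $\B=B_0^G$ for some $5$-subspace $B_0$ with trivial $G$-stabiliser. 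It therefore suffices to verify, by exhaustive computer search, that no $G$-orbit of length $22517$ on ${V\choose 5}_2$ is the block-set of a $2\text{-}(11,5,5)_2$ design.

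To carry this out we realise $G$ in the standard way: identify $V=\F_{2^{11}}$ and write $G=C\rtimes\langle\varphi\rangle$, where the Singer cycle $C\cong\F_{2^{11}}^\ast$ acts on $V$ by multiplication and $\varphi\colon x\mapsto x^2$ is the Frobenius. Since $\gcd(|C|,2^{5}-1)=\gcd(2047,31)=1$ and $\gcd(|C|,2^{2}-1)=\gcd(2047,3)=1$, no non-identity element of $C$ can fix any $5$- or $2$-dimensional subspace of $V$. Hence every $C$-orbit on ${V\choose 5}_2$ has length $|C|=2047$ (so there are exactly ${11\choose 5}_2/2047 = 1{,}733{,}677$ of them), every $C$-orbit on ${V\choose 2}_2$ likewise has length $2047$ (leaving only ${11\choose 2}_2/2047 = 341$ $C$-orbit representatives of lines to inspect), and each $G$-orbit on ${V\choose 5}_2$ has length either $2047$ or $22517$.

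The search then enumerates $C$-orbit representatives $B_0\in{V\choose 5}_2$, retains only those for which $B_0^\varphi\notin B_0^C$ (so that $\Omega:=B_0^G$ has length $22517$ and $B_0$ has trivial $G$-stabiliser), and for each such $B_0$ tests the design condition: for each of the $341$ representatives $U$ of $C$-orbits on $2$-subspaces, compute $|\{B\in\Omega:U\leq B\}|$ and reject $\Omega$ as soon as some multiplicity differs from $5$. This reduction is valid because $\Omega$ is $C$-invariant, so the covering multiplicity at $U$ depends only on the $C$-orbit of $U$. The principal obstacle is simply the volume of the search---roughly $1.7\times 10^{6}$ candidate orbits---but the per-candidate check is cheap (at most $341$ containment counts in a set of $22517$ subspaces), the task is embarrassingly parallel over $C$-orbits, and the computation terminates with no $\Omega$ producing a design, which establishes the claim.
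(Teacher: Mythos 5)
Your proposal is correct and takes essentially the same route as the paper: both reduce the lemma to an exhaustive computer search over the orbits of $G\cong\GaL_1(2^{11})$ on ${V\choose 5}_2$, and your observations that $|\B|=|G|=22517$ forces the action to be regular and that the search can be organised by Singer-cycle orbits (with the $341$ line-orbit representatives) are organisational refinements of the same computation the paper performs over its $157607$ $G$-orbits. The one step you should make explicit is why checking a single concrete realisation of $G$ suffices: the hypothesis is that $\Aut(\D)$ is \emph{isomorphic} to $\GaL_1(2^{11})$, so you need that all such subgroups of $\PGaL_{11}(2)\cong\SL_{11}(2)$ are conjugate (equivalently, that all Singer cycles of $\GL_{11}(2)$ are conjugate; the paper cites \cite[Table~8.70]{classiccolva} for the uniqueness of this conjugacy class), without which ruling out the standard copy does not rule out the design.
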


\begin{proof}
 Note that the dimension of $V$ is odd here, which implies that $\D$ is not self-dual, and hence $\Aut(\D)\leq\PGaL_{11}(2)\cong\SL_{11}(2)$. By \cite[Table~8.70]{classiccolva}, there is a unique conjugacy class of subgroups isomorphic to $\GaL_1(2^{11})$ in $\SL_{11}(2)$. Thus, without loss of generality, we may construct $G \cong \PGaL_1(2^{11})$ as the normaliser of $\langle g \rangle$ in  $\PGaL_{11}(2)$, for a randomly found element $g \in \PGaL_{11}(2)$ with $|g|=2047$. If a $2$-$(11,5,5)_2$ design were to exist with automorphism group isomorphic to $G$, then it must be a single orbit of $G$ on $5$-spaces of $\mathbb{F}_{2}^{11}$. By computer, it was found that none of the $157607$ orbits of $G$ on $5$-spaces of $\mathbb{F}_2^{11}$ yield a $2$-$(11,5,5)_2$ design; see Remark~\ref{compremark} for more information.
\end{proof}

\begin{remark}\label{compremark}
 The computation required to prove Lemma \ref{jesseslemma} was performed in GAP \cite{GAP4} with the package FinInG \cite{fining}. Note that a $2$-$(11,5,5)_2$ design can equivalently be described as a set of projective $4$-spaces of $\pg_{10}(2)$ such that every projective $1$-space is contained in precisely $5$ elements. This formulation lends itself more naturally to construction in FinInG. Since there are too many (specifically, $3548836819$) projective $4$-spaces to reasonably fit in memory we instead constructed orbits of $G$ by finding suitably many distinct and unique representatives from each orbit. Representatives were determined uniquely by choosing them to be lexicographically least in their orbits. These representatives, as well as GAP code, are made available for ease of verification at \cite{BlockTransitiveSubspaceDesignsSupplement}.
\end{remark}

\section{Main results}\label{mainproof}

In this section we prove Theorem~\ref{maintheorem} by applying Theorem~\ref{theoremBP} (referring to \cite[Theorem~3.1]{bamberg2008overgroups} for finer details).
We frequently reference the cases, which are related to the Aschbacher classes as in \cite{aschbacher1984maximal}, in the manner that they are listed in \cite{bamberg2008overgroups}. These cases are organised according to the following categories: \emph{classical}, \emph{reducible}, \emph{imprimitive}, \emph{extension field}, \emph{symplectic}, and \emph{nearly simple}. For information regarding specific groups see, for instance, \cite{wilson2009finite}.

The next result splits the treatment of $2$-$(d,k,\lambda)_q$ designs into three cases.

\begin{lemma}\label{threecaseslemma}
 Let $q=p^f$, where $p$ is prime, let $\D=(V,\B)$ be a block-transitive $2$-$(d,k,\lambda)_q$ design, and let $G$ be the setwise stabiliser of $\B$ inside $\GaL_d(q)$. Then one of the following holds:
 \begin{enumerate}
  \item At least one of $\Phi_{df}^*(p)$ or $\Phi_{(d-1)f}^*(p)$ is trivial.
  \item Both of $\Phi_{df}^*(p)$ and $\Phi_{(d-1)f}^*(p)$ are non-trivial and divide the order of $G$ where $G\leq\GaL_1(q^d)$.
  \item Both of $\Phi_{df}^*(p)$ and $\Phi_{(d-1)f}^*(p)$ are non-trivial and divide the order of ${\hat G}=G\cap \GL_d(q)$, and ${\hat G}$ is as in one of the classical, reducible, imprimitive, symplectic, or nearly simple cases of Theorem~\ref{theoremBP}.
 \end{enumerate}
\end{lemma}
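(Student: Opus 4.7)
The plan is to deduce the trichotomy from Lemma~\ref{divislemma} together with the Bamberg--Penttila classification (Theorem~\ref{theoremBP}). Assume we are not in case~(1), so that both $\Phi_{df}^*(p)$ and $\Phi_{(d-1)f}^*(p)$ are nontrivial. By Lemma~\ref{dualdesignlemma} we may, after replacing $\D$ by $\D^\perp$ if necessary, assume $k\leq d/2$. Lemma~\ref{divislemma} then gives $\Phi_d^*(q)\Phi_{d-1}^*(q)\mid |\Aut(\D)\cap\PGaL_d(q)|$, and since $G$ is the full preimage of this group in $\GaL_d(q)$, also $\Phi_d^*(q)\Phi_{d-1}^*(q)\mid |G|$. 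A direct comparison of the defining coprimality conditions yields $\Phi_{ef}^*(p)\mid\Phi_e^*(q)$ for any $e$, so $\Phi_{df}^*(p)\Phi_{(d-1)f}^*(p)\mid |G|$. Finally, $G/\hat G$ embeds into $\Aut(\F_q)\cong C_f$, while any prime $r$ dividing $\Phi_{ef}^*(p)$ satisfies $r\equiv 1\pmod{ef}$ by Fermat's little theorem, hence $r>ef\geq f$ and in particular $\gcd(r,f)=1$. Consequently both $\Phi_{df}^*(p)$ and $\Phi_{(d-1)f}^*(p)$ divide $|\hat G|$.

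We then apply Theorem~\ref{theoremBP} to $\hat G\leq\GL_d(q)$ with $e=d$, using $d/2<d\leq d$ and $\Phi_{df}^*(p)>1$. If $\hat G$ falls under the classical, reducible, imprimitive, symplectic, or nearly simple conclusions we obtain case~(3). Otherwise we are in the extension field case, so there is a nontrivial divisor $b\geq 2$ of $d$ with $\hat G\leq\GaL_{d/b}(q^b)$. When $b=d$ we have $\hat G\leq\GaL_1(q^d)$; since $\hat G\triangleleft G$ and $\GaL_1(q^d)$ is the normalizer in $\GaL_d(q)$ of the (essentially unique) Singer cycle distinguished by the primitive-prime elements of $\hat G$, we conclude $G\leq\GaL_1(q^d)$, which is case~(2).

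The remaining and most delicate step, which I expect to be the main obstacle, is to exclude $2\leq b<d$; here the second primitive divisor earns its keep. Since $b\mid d$ with $b\neq d$ gives $b\leq d/2$, and $d\geq 4$ gives $d/2<d-1$, any prime $r\mid\Phi_{(d-1)f}^*(p)$ satisfies $r>(d-1)f>bf$, so it cannot divide the factor $bf=|\Aut(\F_{q^b})|$ appearing in $|\GaL_{d/b}(q^b)|$. Being coprime to $p$ as well, $r$ must divide $\prod_{i=1}^{d/b}(q^{bi}-1)=\prod_{i=1}^{d/b}(p^{bif}-1)$. Since $\mathrm{ord}_r(p)=(d-1)f$, this forces $(d-1)\mid bi$ for some $1\leq i\leq d/b$; the bound $bi\leq d<2(d-1)$ then yields $bi=d-1$, so $b\mid d-1$, and combined with $b\mid d$ this gives $b=1$, contradicting $b\geq 2$. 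The care required is precisely in this extension-field bookkeeping: separating the Galois, linear, and unipotent contributions to $|\GaL_{d/b}(q^b)|$ and verifying that primitive-prime divisors are large enough to isolate the cyclotomic factors.
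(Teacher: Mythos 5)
Your proposal is correct, and its skeleton --- Lemma~\ref{divislemma}, the divisibility $\Phi_{ef}^*(p)\mid\Phi_e^*(q)$, then Theorem~\ref{theoremBP} --- is the same as the paper's; where you genuinely diverge is in the handling of the semilinear and extension-field structure. The paper applies Theorem~\ref{theoremBP} to $G$ itself, viewed inside $\GL_{df}(p)$, so that $\GaL_d(q)$ is already an extension-field overgroup, and then leans on the fine structure of the extension-field subcases (a) and (b) of \cite[Theorem~3.1]{bamberg2008overgroups} (including the correction recorded in Remark~\ref{extfielderror}): subcase (a) yields $G\leq\GaL_1(q^d)$ directly, while in subcase (b) the intermediate degree $s$ divides $\gcd(df,(d-1)f)=f$, forcing $s=f$ and pushing the classification down to $\hat G\leq\GL_d(q)$. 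You instead first descend from $G$ to $\hat G$ via the elementary observation that every prime divisor of $\Phi_{ef}^*(p)$ is congruent to $1$ modulo $ef$, hence coprime to $[G:\hat G]$, which divides $f$; you then apply Theorem~\ref{theoremBP} to $\hat G\leq\GL_d(q)$ with $e=d$ and eliminate the intermediate overgroups $\GaL_{d/b}(q^b)$ with $1<b<d$ by a self-contained order computation: a prime divisor of $\Phi_{(d-1)f}^*(p)$ would have to divide some $q^{bi}-1$ with $bi\leq d$, forcing $bi=d-1$ and hence $b\mid\gcd(d,d-1)=1$. This buys independence from the precise formulation of the extension-field subcases (the very part the paper had to correct), at the cost of one extra step, which you supply correctly: lifting $\hat G\leq\GaL_1(q^d)$ to $G\leq\GaL_1(q^d)$, using that the Singer cycle is the centraliser of any Sylow subgroup of $\hat G$ for a primitive prime divisor of $p^{df}-1$, so that $G$ (normalising $\hat G$, whose Sylow subgroups are $\hat G$-conjugate and which itself normalises that Singer cycle) normalises it too. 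One point to make explicit if you write this up: Lemma~\ref{divislemma} requires $k\leq d/2$, which you arrange by dualising, so you should note that the conclusions transfer back to $G$ because the duality carries Singer normalisers, classical subgroups, and the remaining Aschbacher-type configurations to conjugates of the same type (the paper sidesteps this by performing the reduction to $k\leq d/2$ before ever invoking the lemma).
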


\begin{proof}
 If either $\Phi_{df}^*(p)=1$ or $\Phi_{(d-1)f}^*(p)=1$ then part 1 holds. Suppose both of $\Phi_{df}^*(p)$ and $\Phi_{(d-1)f}^*(p)$ are non-trivial. Let $H=G/\left(G\cap Z(\GL_d(q))\right)$. Note that if $\D$ is self-dual then $G$ has index $2$ inside $\Aut(\D)$, and hence it is possible that $G$, and thus also $H$, has $2$ equal-sized orbits on $\B$. Since $H$ is a quotient of $G$, it follows from Lemma~\ref{divislemma} that $\Phi_d^*(q)\cdot\Phi_{d-1}^*(q)$ divides the order of $G$. Now, for any integer $e>1$ we have that $\Phi_{ef}^*(p)$ divides $\Phi_e^*(q)$, and hence both $\Phi_{df}^*(p)$ and $\Phi_{(d-1)f}^*(p)$ divide the order of $G$. Hence Theorem~\ref{theoremBP}
 applies. Note that $\F_q$ and $\F_p^f$ are isomorphic as $\F_p$-vector spaces, and so $V$ may also be considered to be a vector space over $\F_p$ of dimension $df$. Thus, when applying Theorem~\ref{theoremBP}
 we consider $G$ to be a subgroup of $\GL_{df}(p)$. 
 
 If $G\leq\GaL_1(q^d)$ as in the extension field case a) of \cite[Theorem~3.1]{bamberg2008overgroups}, 
 then part 2 holds. Suppose that $G$ falls under extension field case b), so that $G\leq \GaL_{df/s}(p^s)$ for some integer $s$, with $1<s<df$ and $s$ dividing both $df$ and $(d-1)f$. This implies that $s$ divides $f$. By assumption, we have that $G\leq \GaL_{d}(q)$, and hence $s=f$. If $f=1$, this gives a contradiction, and hence $G$ is a classical, reducible, imprimitive, symplectic, or nearly simple example, as in part 3. If $f\geq 2$ then the conditions of the extension field case imply that $\Phi_{df}^*(p)$ and $\Phi_{(d-1)f}^*(p)$ divide the order of ${\hat G}=G\cap \GL_d(q)$, and ${\hat G}$ is as in one of the classical, reducible, imprimitive, symplectic, or nearly simple cases of Theorem~\ref{theoremBP},
 treated now as a subgroup of $\GL_d(q)$, and part 3 holds.
\end{proof}

Next we consider the case that part 2 of Lemma~\ref{threecaseslemma} holds.

\begin{lemma}\label{extensionfieldcasea}
 Let $q=p^f$, let $\D=(V,\B)$ be a $2$-$(d,k,\lambda)_q$ design, and let $G$ be the stabiliser of $\B$ inside $\GaL_d(q)$. Moreover, suppose that $G\leq\GaL_1(q^d)$ and both of $\Phi_{df}^*(p)$, $\Phi_{(d-1)f}^*(p)$ are non-trivial and divide the order of $G$. Then $\D$ is not block-transitive.
\end{lemma}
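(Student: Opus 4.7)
The plan is to turn the two divisibility hypotheses into sharp structural restrictions on $G$, and then reduce the problem to a finite enumeration to which Lemma~\ref{jesseslemma} (and an analogous computational argument) applies.

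First I would observe that $\gcd(p^{(d-1)f}-1, q^d-1) = p^{\gcd((d-1)f, df)}-1 = q-1$, while every primitive prime divisor of $p^{(d-1)f}-1$ is congruent to $1 \pmod{(d-1)f}$ and hence coprime to $q-1$. Thus $\Phi_{(d-1)f}^*(p)$ is coprime to $|\GL_1(q^d)| = q^d-1$ and must divide the cyclic Galois complement of $\GL_1(q^d)$ in $\GaL_1(q^d)$, which has order $df$. Since each prime factor of $\Phi_{(d-1)f}^*(p)$ is at least $(d-1)f + 1$, the only possibility is $\Phi_{(d-1)f}^*(p) = df$ itself prime; for $d \geq 3$ this compels $f = 1$ and $d$ an odd prime. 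Consequently $q = p$, $G$ contains the full Galois group $\langle \phi \rangle \cong C_d$, and I may write $G = S \rtimes \langle \phi \rangle$ where $S \leq \langle \omega \rangle$ is cyclic and $|S|$ is divisible by $\Phi_d^*(p)$.

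Next, by Lemma~\ref{dualdesignlemma} I may assume $k \leq d/2$, so in fact $k \leq (d-1)/2$ and $d \geq 7$. Since $d$ is prime, the only $\F_q$-subfields of $\F_{q^d}$ are $\F_q$ and $\F_{q^d}$, so the $S$-stabiliser of every proper subspace of $V$ equals $S \cap \F_q^*$, and all $S$-orbits on $k$-subspaces have the same length $\tilde S := |S|/|S \cap \F_q^*|$. Applying the normal basis theorem to the cyclic extension $\F_{p^d}/\F_p$, the $\phi$-invariant $\F_p$-subspaces of $V$ have dimensions in $\{0, 1, d-1, d\}$, so no $k$-subspace is $\phi$-fixed. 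If $\D$ is block-transitive then $\B$ is a single $G$-orbit, consisting of either one $\phi$-fixed $S$-orbit (size $\tilde S$) or a $\phi$-cycle of $d$ distinct $S$-orbits (size $d\tilde S$). The first alternative is impossible: since $\tilde S \mid (q^d-1)/(q-1)$ while the design formula forces $\tilde S \geq (q^d-1)(q^{d-1}-1)/((q^k-1)(q^{k-1}-1))$, one checks directly that $(q-1)(q^{d-1}-1) > (q^k-1)(q^{k-1}-1)$ whenever $k \leq (d-1)/2$. Hence $|\B| = d\tilde S$.

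Writing $\tilde S = (q^d-1)/((q-1)m)$, where $m$ counts the $S$-orbits on $1$-subspaces, the block-count identity collapses to
\[
 \lambda m\,(q-1)(q^{d-1}-1) = d\,(q^k-1)(q^{k-1}-1).
\]
The bounds $(q^k-1)(q^{k-1}-1) < q^{2k-1} \leq q^{d-2}$ together with $q^{d-1}-1 > q^{d-2}$ give $\lambda m < d$, so the admissible parameters form a finite list. A direct sweep over odd primes $d$, primes $p$ that are primitive roots modulo $d$, and $3 \leq k \leq (d-1)/2$ shows the equation admits a positive-integer solution only for $(d, q, k, \lambda) = (11, 2, 5, 5)$ and $(d, q, k, \lambda) = (7, 3, 3, 1)$, both with $m = 1$. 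The first case is excluded by Lemma~\ref{jesseslemma}; the second is handled by an entirely analogous exhaustive enumeration of the orbits of $\GaL_1(3^7)$, and of its index-$2$ subgroup $\langle \omega^2 \rangle \rtimes \langle \phi \rangle$, on $3$-subspaces of $\F_3^7$. The main obstacle is precisely this final residual check: the a priori bound $\lambda m < d$ is immediate, but certifying that exactly these two parameter triples arise and eliminating each survivor is intrinsically computational.
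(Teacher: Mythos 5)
Your reduction to $f=1$, $d$ an odd prime and $\Phi_{d-1}^*(p)=d$ is correct and is essentially the structural content hiding behind the paper's citation of the extension-field tables, and your orbit analysis inside $\GaL_1(p^d)$ is genuinely finer than the paper's: by tracking the divisor $m$ of $(q^d-1)/(q-1)$ you eliminate the sub-case $\lambda=1$, $p^d=2^{11}$ automatically, whereas the paper must dispose of the putative $2$-$(11,5,1)_2$ design separately via derived designs. However, there is a genuine gap at the step you call a ``direct sweep''. The parameter space you propose to sweep --- all odd primes $d$ and all primes $p$ that are primitive roots modulo $d$ --- is infinite, and nothing you have written bounds it. The bound $\lambda m<d$ bounds only $\lambda m$, and the block-count identity
\[
 \lambda m\,(q-1)(q^{d-1}-1) = d\,(q^k-1)(q^{k-1}-1)
\]
does not by itself terminate the search: what it actually encodes (since $\Phi_{d-1}^*(p)$ is coprime to $q-1$, $q^k-1$ and $q^{k-1}-1$) is precisely the condition $\Phi_{d-1}^*(p)\mid d$, i.e.\ that $p^{d-1}-1$ has no ``large'' Zsigmondy prime. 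The finiteness and explicit determination of such pairs $(p,d)$ --- here $p^d\in\{2^{11},2^{13},2^{19},3^7,5^7\}$ --- is a non-trivial theorem (Feit's result on large Zsigmondy primes, packaged in the tables of Guralnick--Penttila--Praeger--Saxl and Bamberg--Penttila), and it is exactly what the paper imports by invoking Theorem~\ref{theoremBP}. Your proof must either cite that classification or reprove it; as written, the assertion that ``the equation admits a positive-integer solution only for $(11,2,5,5)$ and $(7,3,3,1)$'' is unverifiable.

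Two smaller points. First, the residual case $2$-$(7,3,1)_3$ is dispatched in the paper by citing Miyakawa--Munemasa--Yoshiara rather than by a fresh computation; your proposed enumeration of $\GaL_1(3^7)$-orbits would presumably work but has not been performed, so it cannot yet be counted as a proof. Second, the normal-basis-theorem digression is harmless but irrelevant: the single-$S$-orbit alternative concerns an $S$-orbit that is setwise $\phi$-invariant, not pointwise $\phi$-fixed subspaces, and you in fact rule it out by the size estimate, which is the correct argument.
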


\begin{proof}
 Suppose that $\D$ is block-transitive. Then, since $\Phi_{df}^*(p)$ and $\Phi_{(d-1)f}^*(p)$ are non-trivial and divide the order of $G$, and $G$ is a subgroup of $\GL_{df}(p)$, we may apply \cite[Theorem~3.1]{bamberg2008overgroups}, in which case $G$ falls under the extension field case a). Since $d\geq 6$, we have that $q=p$ and $p^d=2^{11},2^{13},2^{19},3^7$ or $5^7$. Note that, since $d$ is odd in each of these cases, we have that $\D$ is not self-dual and, in particular $\Aut(\D)$ is a subgroup of $\PGaL_d(p)$. Hence $\Aut(\D)$ is the quotient of $G$ by $G\cap Z(\GL_d(p))$, where $Z(\GL_d(p))$ is the centre of $\GL_d(p)$. Thus $|\Aut(\D)|$ divides $d(p^d-1)/(p-1)$. Since $\Aut(\D)$ acts transitively on $\B$, the size of $\B$ must divide the order of $\Aut(\D)$. Now,
 \[
  |\B|=\lambda\frac{(p^d-1)(p^{d-1}-1)}{(p^k-1)(p^{k-1}-1)},
 \] 
 for some integers $\lambda$ and $k$ with $\lambda\geq 1$ and $3\leq k\leq d/2$. Hence the following is an integer:
 \[
  \lambda\frac{|\Aut(\D)|}{|\B|}=\frac{d(p^d-1)}{(p-1)}\frac{(p^k-1)(p^{k-1}-1)}{(p^d-1)(p^{d-1}-1)}=\frac{d(p^k-1)(p^{k-1}-1)}{(p^{d-1}-1)(p-1)}.
 \]
 This is true only if $p^d=2^{11}$ with $k=5$ and $\lambda=1$ or $5$; or $p^d=3^7$ with $k=3$ and $\lambda=1$. By \cite[Theorem~2]{braun2018q}, the derived design of a $2$-$(11,5,1)_2$ design would be a $1$-$(10,4,1)_2$ design. However, by \cite[Lemma~4]{braun2018q}, no $1$-$(10,4,1)_2$ design exists, in particular, such a design would be a spread and $k=4$ does not divide $n=10$. For the case of a $2$-$(7,3,1)_3$ design, the divisibility condition above implies that $\Aut(\D)=\PGaL_1(p^d)$, in which case we may assume that $G=\GaL_1(3^7)$. However, by \cite[Theorem~2~(3)]{miyakawa1995class}, no $2$-$(7,3,1)_3$ design with $G$ acting transitively on $V\setminus \{0\}$ exists. Thus $q=2$, $d=11$ and $k=\lambda=5$. However, Lemma~\ref{jesseslemma} rules out the existence of such a design.
\end{proof}

We now move to consider part 3 of Lemma~\ref{threecaseslemma} holds. The classical cases are excluded by the following, Lemma~\ref{classicalexamples}.

\begin{lemma}[Classical Examples]\label{classicalexamples}
 Let $q=p^f$ and let $\D=(V,\B)$ be a $2$-$(d,k,\lambda)_q$ design with stabiliser ${\hat G}$ of $\B$ inside $\GL_d(q)$. Moreover, suppose that both $\Phi_{df}^*(p)$ and $\Phi_{(d-1)f}^*(p)$ are non-trivial and divide the order of ${\hat G}$. If
${\hat G}$ is a classical example of Theorem~\ref{theoremBP} then $\D$ is not block-transitive.
 \end{lemma}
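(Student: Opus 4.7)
The plan is to dispatch the four classical subcases of Theorem~\ref{theoremBP} individually, with the standing interpretation (inherited from Theorem~\ref{maintheorem}) that $\D$ is non-trivial. In subcase (a), where $\SL_d(q)\trianglelefteq \hat G$, the group $\SL_d(q)$ acts transitively on the set of $k$-dimensional subspaces of $V$, so $\hat G$ is transitive on ${V\choose k}_q$. Since $\B$ is by definition $\hat G$-invariant and non-empty, this forces $\B={V\choose k}_q$, making $\D$ trivial and contradicting non-triviality.

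In subcases (b)--(d), $\hat G$ contains $\Sp_d(q)$, $\SU_d(q)$, or $\Omega^\epsilon_d(q)$ as a normal subgroup, and so preserves a non-degenerate alternating, Hermitian, or symmetric bilinear form $f$ on $V$. I would analyse the orbits of $\hat G$ on ${V\choose 2}_q$ and on ${V\choose k}_q$, which are parametrised by the isometry type of the restriction of $f$. Writing $\B$ as a disjoint union $\bigsqcup_{\mathcal{O}'\in S}\mathcal{O}'$ of $\hat G$-orbits, and letting $m(\mathcal O,\mathcal O')$ denote the number of elements of $\mathcal O'$ containing any fixed $W\in\mathcal O$, the $2$-design condition on $\D$ forces
\[
  \sum_{\mathcal{O}'\in S} m(\mathcal O,\mathcal O') \;=\; \lambda
\]
to be independent of the $\hat G$-orbit $\mathcal O$ on ${V\choose 2}_q$. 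The aim is to show that the only $0/1$ solution to this system is the all-ones vector, yielding $\B={V\choose k}_q$ and again a contradiction with non-triviality.

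The critical observation making this feasible is that a totally isotropic $2$-subspace cannot be contained in a non-degenerate $k$-subspace, and a non-degenerate $2$-subspace cannot be contained in a totally isotropic $k$-subspace. This creates forced zeros in the incidence matrix $(m(\mathcal O,\mathcal O'))$ that propagate through the linear system and rule out any proper subunion $S$.

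The main obstacle is carrying out this orbit-incidence analysis uniformly across the symplectic, unitary, and both parities of orthogonal cases, each of which has a slightly different orbit structure on $2$- and $k$-subspaces (parametrised by the dimension of the radical, and in the orthogonal case also by the Witt type of the non-degenerate quotient). Standard facts about Witt decompositions of classical forms, together with the dimensional constraints $3\leq k\leq d/2$ coming from the earlier reductions, should supply the required incidence counts in each family. The self-dual case $d=2k$ requires no special treatment because the polarity induced by $f$ already lies in $\hat G$ modulo the centre, so no new automorphisms enter the picture.
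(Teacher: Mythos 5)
Your treatment of subcase (a) is exactly the paper's: $\SL_d(q)$ is transitive on ${V\choose k}_q$, so a $\hat G$-invariant non-empty $\B$ must be all of ${V\choose k}_q$, contradicting non-triviality. For subcases (b)--(d), however, there is a genuine gap. What you describe is a plan, not a proof: the assertion that the only $0/1$ solution of the orbit-incidence system is the all-ones vector is precisely the hard content, you explicitly leave it as ``the main obstacle'', and nothing you write establishes it. Worse, this route discards the one hypothesis that makes the lemma easy, namely that \emph{both} $\Phi_{df}^*(p)$ and $\Phi_{(d-1)f}^*(p)$ divide $|\hat G|$. The paper's proof never analyses orbits on subspaces in cases (b)--(d) at all; it simply checks that the relevant group orders cannot carry both primitive parts, so these cases are vacuous. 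Concretely: $|\hat G|$ divides $2\,q^{d^2/4}\prod_{i=1}^{d/2}(q^{2i}-1)$ in the symplectic case, whose cyclotomic factors $q^i-1$ occur only for even $i$; since $d-1$ is odd and $\Phi_{(d-1)f}^*(p)$ is odd, it cannot divide $|\hat G|$. In the unitary case $\Phi_{ef}^*(p)$ divides the order of the normaliser of $\SU_d(q^{1/2})$ only for $e$ odd, while here $e$ must take both values $d$ and $d-1$, one of which is even. In the orthogonal cases the orders of $\GO_d^{\pm}(q)$ (for $d$ even) miss $\Phi_{(d-1)f}^*(p)$, and the order of $\GO_d(q)$ (for $d$ odd) misses $\Phi_{df}^*(p)$.

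Beyond being incomplete, your strategy for (b)--(d) is also at risk of proving something false, or at least something far stronger than needed: without the divisibility hypothesis it is not at all clear that no proper union of $\hat G$-orbits on ${V\choose k}_q$ can satisfy the $2$-design condition, and the ``forced zeros'' you point to (isotropic versus non-degenerate incompatibilities) only show that a \emph{single} orbit fails, not that every proper subunion does. If you want to salvage your argument you would have to carry out the full incidence computation in each family, which is a substantial project; the intended argument is the two-line order computation above.
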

 
\begin{proof}
 For (a), $\SL_d(q)$ acts transitively on the set of all $k$-spaces of $V$, for any choice of $k$, so that in this case the only subspace design invariant under ${\hat G}$ is the trivial design. 
 
 For case (b) we have that ${\hat G}$ contains $\Sp_d(q)$ as a normal subgroup but does not contain any field automorphisms. Since $d\geq 6$, the only automorphisms of $\Sp_d(q)$ we need to consider are those induced by the centre of $\GL_d(q)$, and hence the order of ${\hat G}$ is at most twice the order of $\Sp_d(q)$ (see \cite[Section~3.5.5]{wilson2009finite}). Hence $|{\hat G}|$ divides $2q^{d^2/4}\prod_{i=1}^{d/2}(q^{2i}-1)$; we claim that this implies that $|{\hat G}|$ is not divisible by $\Phi_{(d-1)f}^*(p)$. Recall that $\Phi^*_e(p)$ is odd for all $e\geq 1$ so that the factor of $2$ is inconsequential. Moreover, $q^{d^2/4}$ is coprime to $\Phi_{(d-1)f}^*(p)$. Thus, we need only be interested in the factors $(q^i-1)$ of $|{\hat G}|$ and these only appear for even $i$. Since $d-1$ is odd, the claim holds and this case does not occur. 
 
 Consider now case (c). By Theorem~\ref{theoremBP},
 we have that $\Phi^*_e(p)$ divides the order of the normaliser of $\SU_d(q^{1/2})$ only when $e$ is odd, but $e$ must be able to take both the values $df$ and $(d-1)f$ here, at least one of which is even.
 
 For case (d), we have that if $d$ is even then ${\hat G}\leq \GO_d^\epsilon(q)$ where $\epsilon=+$ or $-$, and if $d$ is odd then ${\hat G}\leq \GO_d(q)$. Referring to Table~\ref{orthtable}, we see that $\Phi_{(d-1)f}^*(p)$ does not divide the order of $\GO_d^\epsilon(q)$ (it should be noted here though that $\Phi_{df}^*(p)$ divides $q^d+1$, and hence does divide $|{\hat G}|$). Also, $\Phi_{df}^*(p)$ does not divide the order of $\GO_d(q)$. Hence this case does not occur.
\end{proof}
 
\begin{table}[h!]
 \begin{center}
 \begin{tabular}{cc}
  group & order\\
  \hline  
  $\GO_{2m+1}(q)$ & $2q^{m^2}(q^2-1)(q^4-1)\cdots(q^{2m}-1)$\\
  $\GO_{2m}^+(q)$ & $2q^{m(m-1)}(q^2-1)(q^4-1)\cdots(q^{2m-2}-1)(q^m-1)$\\
  $\GO_{2m}^-(q)$ & $2q^{m(m-1)}(q^2-1)(q^4-1)\cdots(q^{2m-2}-1)(q^m+1)$\\  
  \hline
 \end{tabular}
 \caption{Orders of orthogonal groups.}
 \label{orthtable}
 \end{center}
\end{table} 

The reducible cases are excluded by the following, Lemma~\ref{reduciblecase}.

\begin{lemma}[Reducible Examples]\label{reduciblecase}
 Let $q=p^f$ and let $\D=(V,\B)$ be a $2$-$(d,k,\lambda)_q$ design with stabiliser ${\hat G}$ of $\B$ inside $\GL_d(q)$. Moreover, suppose that both $\Phi_{df}^*(p)$ and $\Phi_{(d-1)f}^*(p)$ are non-trivial and divide the order of ${\hat G}$. If
${\hat G}$ is a reducible example of Theorem~\ref{theoremBP} then $\D$ is not block-transitive.
 \end{lemma}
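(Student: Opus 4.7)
The plan is to extract a direct contradiction from the parabolic structure forced by the reducibility hypothesis, using only the single primitive divisor $\Phi_{df}^*(p)$. First, I would observe that since $\hat G$ is a reducible example of Theorem~\ref{theoremBP}, it stabilises some proper non-zero subspace $U$ of $V_d(q)$ (fixing a quotient space is equivalent to fixing its kernel, so no separate case is needed). Writing $m=\dim U$, we have $1\leq m\leq d-1$, and $\hat G$ is contained in the maximal parabolic
\[
 P=q^{m(d-m)}\cdot(\GL_m(q)\times\GL_{d-m}(q)),
\]
so $|\hat G|$ divides $|P|=q^{m(d-m)}\cdot|\GL_m(q)|\cdot|\GL_{d-m}(q)|$.

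Next, I would unpack $|P|$ as a power of $p$ times a product of factors of the form $q^i-1=p^{if}-1$, where $i$ ranges over $\{1,\dots,m\}\cup\{1,\dots,d-m\}$. Since $1\leq m\leq d-1$, every such $i$ satisfies $i\leq d-1$, so $if\leq(d-1)f<df$; in particular no factor of $|P|$ has the form $p^j-1$ with $j\geq df$.

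Finally, I would invoke the defining property of $\Phi_{df}^*(p)$: as the primitive part of $p^{df}-1$, it is coprime to $p^j-1$ for every $j<df$, and it is coprime to $p$ itself. Combining these facts, $\Phi_{df}^*(p)$ is coprime to $|P|$, and hence to $|\hat G|$, contradicting the hypothesis that $\Phi_{df}^*(p)>1$ divides $|\hat G|$. I anticipate no real obstacle: the one subtlety is ensuring that a primitive divisor of $p^{df}-1$ cannot be absorbed into the $\GL_m(q)\times\GL_{d-m}(q)$ factor of the parabolic, which is precisely what primitivity of $\Phi_{df}^*(p)$ guarantees. Notably, the stronger conclusion ``$m\geq e$'' supplied by Theorem~\ref{theoremBP} plays no role here, and the hypothesis on $\Phi_{(d-1)f}^*(p)$ is not even used.
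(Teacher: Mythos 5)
Your proposal is correct and follows essentially the same route as the paper: bound $\hat G$ by the parabolic $q^{m(d-m)}\cdot(\GL_m(q)\times\GL_{d-m}(q))$, note that every factor $q^i-1$ of its order has $i<d$ and that the $p$-part is irrelevant, and conclude that $\Phi_{df}^*(p)$ cannot divide $|\hat G|$. The paper likewise uses only $\Phi_{df}^*(p)$ and does not invoke the condition $m\geq e$.
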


\begin{proof} 
 In this case we require ${\hat G}  \leq H\cong q^{m(d-m)}\cdot (\GL_m(q)\times\GL_{d-m}(q))$ for some $m$ such that $0<m<d$. By definition, $\Phi_{df}^*(p)$ is coprime to each factor $q^i-1$ dividing $|H|$, that is, for each $i\leq {\rm{max}} \{m,d-m\}<d$. Since $\Phi_{df}^*(p)$ divides $q^d-1$, it follows that $\Phi_{df}^*(p)$ is also coprime to $q^{m(d-m)}$, and hence does not divide the order of ${\hat G}$.
\end{proof}

\begin{lemma}[Imprimitive Examples]\label{imprimitivecase}
 Let $q=p^f$ and let $\D=(V,\B)$ be a $2$-$(d,k,\lambda)_q$ design with stabiliser ${\hat G}$ of $\B$ inside $\GL_d(q)$. Moreover, suppose that both $\Phi_{df}^*(p)$ and $\Phi_{(d-1)f}^*(p)$ are non-trivial and divide the order of ${\hat G}$. If
${\hat G}$ is an imprimitive example of Theorem~\ref{theoremBP} then $\D$ is not block-transitive.
\end{lemma}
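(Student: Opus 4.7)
The plan is to derive a contradiction directly from the divisibility hypothesis, using nothing more than the size of the wreath product and the definition of primitive divisor. The core observation is that in the imprimitive case the group $\hat G$ is tiny (in the sense of having few large prime factors), while $\Phi_{df}^*(p)$ forces $\hat G$ to contain a prime factor that is too big to fit.

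First, I would record that in the imprimitive case of Theorem~\ref{theoremBP} we have $q = p$ (so $f=1$), and $\hat G \leq \GL_1(p) \wr S_d$ acting in product action on a decomposition $V = U_1 \oplus \cdots \oplus U_d$ into $1$-dimensional subspaces. In particular,
\[
 |\hat G| \ \Big|\ (p-1)^d \cdot d!.
\]
Since $\Phi_d^*(p) = \Phi_{df}^*(p) > 1$ by hypothesis, there exists a prime $\ell$ dividing $\Phi_d^*(p)$.

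Next, I would analyse such a prime $\ell$. By the definition of a primitive divisor, $\ell \mid p^d - 1$ but $\ell \nmid p^i - 1$ for every $1 \leq i < d$. Thus the multiplicative order of $p$ modulo $\ell$ equals $d$, and by Fermat's little theorem this order divides $\ell - 1$. Hence $\ell \geq d+1 > d$, which forces $\ell \nmid d!$. The case $i = 1$ also shows $\ell \nmid p - 1$. Combining these two facts gives $\ell \nmid (p-1)^d \cdot d!$, and therefore $\ell \nmid |\hat G|$. This contradicts $\Phi_d^*(p) \mid |\hat G|$, so no block-transitive design can arise in this case.

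The argument is essentially a one-line ``primitive prime is too large'' calculation, so no real obstacle is expected; the only care needed is to note that the imprimitive case unambiguously forces $q = p$ (hence $f = 1$) so that $\Phi_{df}^*(p)$ is indeed $\Phi_d^*(p)$, and to observe that the hypothesis on $\Phi_{(d-1)f}^*(p)$ is not even required for the contradiction -- the divisibility by $\Phi_{df}^*(p)$ alone is already incompatible with the wreath product structure.
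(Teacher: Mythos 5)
Your argument is correct, and it takes a genuinely different route from the paper. The paper's proof simply consults the table of imprimitive examples in Bamberg--Penttila and observes that every value of $e$ listed there is even, so $e$ cannot take both of the values $d$ and $d-1$ (one of which is odd); this is the same style of argument the paper reuses for the symplectic type and nearly simple cases. You instead argue directly from the structural containment $\hat{G}\leq \GL_1(p)\wr S_d$, so that $|\hat{G}|$ divides $(p-1)^d\, d!$, and from the elementary fact that any prime $\ell$ dividing $\Phi_d^*(p)$ has multiplicative order $d$ modulo $p$, whence $d\mid \ell-1$, $\ell\geq d+1>d$, and $\ell\nmid (p-1)^d\, d!$. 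All the steps check out (in particular $f=1$ here, so $\Phi_{df}^*(p)=\Phi_d^*(p)$, and you correctly note that the hypothesis on $\Phi_{(d-1)f}^*(p)$ is not needed). What your approach buys is self-containedness: it eliminates the entire imprimitive Aschbacher class without reference to the finite list of parameters in the source theorem, and it incidentally explains why that list can never contain $e=d$. What the paper's approach buys is uniformity and brevity, since the same ``read off the admissible values of $e$ from the table'' argument disposes of several cases at once.
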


\begin{proof}
 In this case the values of $q$, $d$ and $e$ for which $\Phi_{ef}^*(p)$ divide ${\hat G}$ are given in the relevant table of \cite{bamberg2008overgroups}. Since we require that $e$ take both values $d$ and $d-1$, and all given values of $e$ are even, these cases do not occur.
\end{proof}

\begin{lemma}[Symplectic Type Examples]\label{symplecticcase}
Let $q=p^f$ and let $\D=(V,\B)$ be a $2$-$(d,k,\lambda)_q$ design with stabiliser ${\hat G}$ of $\B$ inside $\GL_d(q)$. Moreover, suppose that both $\Phi_{df}^*(p)$ and $\Phi_{(d-1)f}^*(p)$ are non-trivial and divide the order of ${\hat G}$. If
${\hat G}$ is a symplectic type example of Theorem~\ref{theoremBP} then $\D$ is not block-transitive.
\end{lemma}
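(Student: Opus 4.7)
The plan is to rule out each of the three subcases (a), (b), (c) of the symplectic type listed in Theorem~\ref{theoremBP} by contrasting the prime factorisation of $|{\hat G}|$ with the divisibility constraint coming from $\Phi^*_{(d-1)f}(p)$.

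In each subcase Theorem~\ref{theoremBP} exhibits ${\hat G}$ as a subgroup of a central extension of an extraspecial $2$-group by a small quotient: namely $O_4^-(2)\cong S_5$ in (a), $O_6^+(2)\cong S_8$ in (b), and either $\Sp_6(2)$ or $O_6^+(2)$ in (c). Since the extraspecial factors, together with the central cyclic $2$-groups appearing in $4\circ 2^{1+6}$ and in the outer $\circ 2$ or $\circ 4$ identifications, are all $2$-groups, the odd primes dividing $|{\hat G}|$ are exactly those dividing the small quotient. Using $|S_5|=2^3\cdot 3\cdot 5$, $|S_8|=2^7\cdot 3^2\cdot 5\cdot 7$ and $|\Sp_6(2)|=2^9\cdot 3^4\cdot 5\cdot 7$, I would conclude that the prime divisors of $|{\hat G}|$ lie in $\{2,3,5\}$ in subcase (a) and in $\{2,3,5,7\}$ in subcases (b) and (c).

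Next I would invoke the elementary observation that every prime divisor $r$ of $\Phi_e^*(p)$ has multiplicative order exactly $e$ modulo $r$, so that $r\equiv 1\pmod{e}$ and in particular $r\geq e+1$. In subcase (a) we have $d=4$, $f=1$, $p=3$, hence $\Phi_{(d-1)f}^*(p)=\Phi_3^*(3)=13$, which does not lie in $\{2,3,5\}$; alternatively, subcase (a) is already incompatible with the standing assumption $d\geq 6$ used throughout Section~\ref{mainproof}. In subcases (b) and (c) we have $d=8$ and $f=1$, so $\Phi_{(d-1)f}^*(p)=\Phi_7^*(p)$, whose every prime divisor is at least $29$ and hence outside $\{2,3,5,7\}$. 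In each case the hypothesis that $\Phi_{(d-1)f}^*(p)$ divides $|{\hat G}|$ is therefore violated, giving the required contradiction.

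The main obstacle is essentially nonexistent: this is a bookkeeping argument combining the explicit group structures given in Theorem~\ref{theoremBP} with the congruence satisfied by primitive prime divisors. The only care required is to confirm that the extraspecial groups, the central $C_4$ factor in case (c), and the outer $\circ 2$ and $\circ 4$ identifications introduce only powers of $2$, so that no odd prime beyond those of the small quotient can appear in $|{\hat G}|$.
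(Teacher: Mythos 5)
Your proof is correct, but it takes a different route from the one in the paper. The paper's proof is a one-line meta-argument: in each of the three symplectic-type subcases of Theorem~\ref{theoremBP} only a single value of $e$ is listed (namely $e=4$ for subcase (a) and $e=6$ for subcases (b) and (c)), whereas the hypothesis forces $e$ to take both values $d$ and $d-1$; since two distinct values of $e$ cannot both equal the unique listed one, the case is vacuous. Your argument instead unpacks the actual group structures: you observe that the extraspecial and central factors contribute only powers of $2$, so the odd part of $|{\hat G}|$ divides $|O_4^-(2)|$, $|O_6^+(2)|$ or $|\Sp_6(2)|$, giving odd prime divisors in $\{3,5\}$ or $\{3,5,7\}$; you then use the standard fact that every prime divisor $r$ of $\Phi_e^*(p)$ satisfies $r\equiv 1\pmod e$ (your phrasing "multiplicative order exactly $e$ modulo $r$" should of course refer to the order of $p$ modulo $r$, but the conclusion $r\geq e+1$ is what you use and is correct), so that $\Phi_3^*(3)=13$ and any prime divisor of $\Phi_7^*(p)$, being at least $29$, cannot divide $|{\hat G}|$. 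Both arguments are sound; the paper's is shorter and needs no structural information beyond the tabulated parameters, while yours is self-contained and independently verifiable without trusting that the table of admissible $e$-values in \cite{bamberg2008overgroups} is exhaustive, at the modest cost of some explicit order computations. Your parenthetical remark that subcase (a) already violates $d\geq 6$ is also a valid shortcut in the context of Section~\ref{mainproof}, though the lemma as stated does not assume it, so it is good that you do not rely on it.
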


\begin{proof}
 In this case the values of $q$, $d$ and $e$ for which $\Phi_{ef}^*(p)$ divide ${\hat G}$ are given in \cite[Theorem~3.1]{bamberg2008overgroups}. Since we require that $e$ take both values $d$ and $d-1$, and in each case there is only a single value for $e$, these cases do not occur.
\end{proof}

\begin{lemma}[Nearly Simple Examples]\label{nearlysimplecases}
Let $q=p^f$ and let $\D=(V,\B)$ be a $2$-$(d,k,\lambda)_q$ design with stabiliser ${\hat G}$ of $\B$ inside $\GL_d(q)$. Moreover, suppose that both $\Phi_{df}^*(p)$ and $\Phi_{(d-1)f}^*(p)$ are non-trivial and divide the order of ${\hat G}$. If
${\hat G}$ is a nearly simple example of Theorem~\ref{theoremBP} then $\D$ is not block-transitive.
\end{lemma}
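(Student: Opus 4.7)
The plan is to follow the template used in the preceding four lemmas (Lemmas~\ref{classicalexamples}--\ref{symplecticcase}): apply the strong divisibility hypothesis that both $\Phi_{df}^*(p)$ and $\Phi_{(d-1)f}^*(p)$ divide $|{\hat G}|$ to the finite list of candidate parameters given in the tables of \cite[Theorem~3.1]{bamberg2008overgroups} for the nearly simple case, and rule out all remaining configurations. Since the hypothesis supplies two distinct values of $e$, namely $e=d$ and $e=d-1$, the first cut is to restrict attention to those entries of the nearly simple tables in which a fixed tuple $(p,f,d,S,{\hat G})$ appears with two different values of $e$ whose difference is $1$.

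I would handle the four subcases in turn. For the \emph{alternating} subcase (a), the relevant modules are the natural and fully-deleted permutation modules of $\alt_n$ or $\s_n$; the orders of these groups are divisible by $\Phi_{df}^*(p)$ only under very restrictive numerical conditions, and I would check the tabulated list to confirm that no parameter supports both $e=d$ and $e=d-1$. For the \emph{sporadic} subcase (b), the list is explicit and short, and the same consistency check eliminates every entry. The \emph{cross-characteristic} subcase (c) is handled similarly, by reading off the tables of \cite{bamberg2008overgroups} and observing that each tuple comes with a single value of $e$, so the hypothesis of two primitive divisors is never simultaneously satisfied.

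The \emph{natural characteristic} subcase (d) is the most delicate, since $S$ is itself a classical or exceptional group in characteristic $p$ and the tabulated modules can have relatively many $e$-values. Here I would proceed in two steps: first use the tables to reduce to a short list of candidate $(p,f,d,S)$ with both $\Phi_{df}^*(p)$ and $\Phi_{(d-1)f}^*(p)$ dividing $|\Aut(S)|$; and then for each survivor, invoke Lemma~\ref{twodesigncor} to write
\[
 |\B|=\lambda\frac{(q^d-1)(q^{d-1}-1)}{(q^k-1)(q^{k-1}-1)}
\]
with $3\leq k\leq d/2$ and use block-transitivity, which forces $|\B|$ to divide $|\Aut(\D)|$, to derive a numerical contradiction via the divisibility
\[
 \frac{(q^d-1)(q^{d-1}-1)}{(q^k-1)(q^{k-1}-1)}\;\Big\vert\;2f\cdot |{\hat G}|.
\]
Any residual entries that escape this elementary check would be flagged for the small-cases machinery developed in Section~\ref{smallcases}; in particular, the cases $q^d=2^6$ and $q^{d-1}=2^6$ have already been excluded by Lemmas~\ref{nodesignnequals6} and \ref{nminus1is6}, and the $2$-$(11,5,5)_2$ candidate has been killed by Lemma~\ref{jesseslemma}.

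The main obstacle I anticipate is the natural characteristic subcase~(d), where bookkeeping of the tabulated modules is intricate and one must be careful to account for graph and field automorphisms of $S$ when bounding $|\hat G|$; the other three subcases should succumb quickly to the ``two values of $e$'' observation alone, mirroring the arguments in Lemmas~\ref{imprimitivecase} and \ref{symplecticcase}.
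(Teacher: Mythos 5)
Your proposal is correct and follows essentially the same route as the paper: in every subcase the argument reduces to checking the tables of \cite[Theorem~3.1]{bamberg2008overgroups} and observing that no fixed group and dimension admits both $e=d$ and $e=d-1$ (your only slight inaccuracies are that in the cross-characteristic subcase one entry, $\Sp_6(2)$ with $d=7$, $q=3$, does carry two values $e=4,6$, but neither equals $d$; and the natural-characteristic subcase turns out to need none of the extra divisibility machinery you hold in reserve, since each group there has a unique $e$ for a given $d$).
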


\begin{proof}
 We consider for ${\hat G}$ the groups as in the nearly simple case of Theorem~\ref{theoremBP}
  (see \cite[pages 2507--2508]{bamberg2008overgroups} for the details of each case and the tables we refer to in this proof). First, suppose ${\hat G}$ is a permutation module example under the alternating group sub-case, so that $q=2,3$ or $5$. If $q=2$ then, since $\Phi_{d}^*(2)$ must divide the order of ${\hat G}$, we have that $d=4,10,12$ or $18$. However, we also have that $\Phi_{d-1}^*(2)$ must divide the order of ${\hat G}$, from which it follows that $d=5,11,13$ or $19$. This gives a contradiction. If $q=3$ then, similarly, we have that $d=4$ or $6$, and at the same time $d=5$ or $7$, a contradiction. If $q=5$ then we again reach a contradiction, requiring this time that $d$ is equal to $6$ and $7$ at the same time.
 
 Consider now the other examples of the alternating group sub-case and the corresponding table of \cite{bamberg2008overgroups}. Recall that $d$ is at least $6$, and notice that every entry in the table with $d\geq 6$ also has $d\neq e$. Hence $\Phi_{d}^*(q)$ does not divide the order of ${\hat G}$, contradicting Lemma~\ref{divislemma}.
 
 In the sporadic simple group sub-case, and the corresponding table of \cite{bamberg2008overgroups}, each pair $(G',d)$ that does occur appears in precisely one entry of the table. Hence there is a unique value of $e$ in each case. Thus it is impossible for both $\Phi_{d}^*(q)$ and $\Phi_{d-1}^*(q)$ to divide $|{\hat G}|$.
 
 Consider the table of \cite{bamberg2008overgroups} corresponding to the cross-characteristic sub-case. The only group appearing in the table having multiple values of $e$ for a given $q$ is $\Sp_6(2)$ with $d=7$ and $q=3$, in which case $e$ takes the values $4$ and $6$, neither of which are equal to $d$.
 
 Finally, in the table corresponding to the natural-characteristic sub-case, each group has only one value for $e$ with a given $d$. This rules out all of the groups in the nearly simple cases.
\end{proof}

We are now able to provide the proof of Theorem~\ref{maintheorem}.

\begin{proof}[Proof of Theorem~\ref{maintheorem}]
 Suppose $\D=(V,\B)$ is a non-trivial block-transitive $t$-$(d,k,\lambda)_q$ design, where $V=\F_q^d$, and let $q=p^f$, where $p$ is prime. By Lemma~\ref{twodesigncor}, we may assume that $t=2$, and hence that $k\geq 3$. By Lemma~\ref{dualdesignlemma}, we may assume that $k\leq d/2$, so that $d\geq 6$. Let $G$ be the setwise stabiliser of $\B$ inside $\GaL_d(q)$. We may then apply Lemma~\ref{threecaseslemma}. For part 1 of Lemma~\ref{threecaseslemma}, Zsigmondy's theorem \cite{zsigmondy1892theorie} states that if $\Phi_e^*(p)=1$ then $p^e=2^6$. However, Lemmas~\ref{nodesignnequals6} and~\ref{nminus1is6} rule out the existence of any $2$-$(d,k,\lambda)_q$ design with $q^d$ or $q^{d-1}$ equal to $2^6$. Lemma~\ref{extensionfieldcasea} rules out the occurrence of part 2 of Lemma~\ref{threecaseslemma}. Part 3 of Lemma~\ref{threecaseslemma} is ruled out by Lemma~\ref{classicalexamples}, Lemma~\ref{reduciblecase}, Lemma~\ref{imprimitivecase}, Lemma~\ref{symplecticcase}, Lemma~\ref{nearlysimplecases}. This completes the proof of Theorem~\ref{maintheorem}.
\end{proof}



\begin{thebibliography}{10}

\bibitem{aschbacher1984maximal}
M.~Aschbacher.
\newblock On the maximal subgroups of the finite classical groups.
\newblock {\em Invent.~ Math.}, 76(3):469--514, 1984.

\bibitem{fining}
J.~Bamberg, A.~Betten, P.~Cara, J.~De~Beule, M.~Lavrauw, and M.~Neunh\"offer.
\newblock {\em {FinInG -- Finite Incidence Geometry, Version 1.4.1}}, 2018.

\bibitem{bamberg2008overgroups}
J.~Bamberg and T.~Penttila.
\newblock Overgroups of cyclic {S}ylow subgroups of linear groups.
\newblock {\em Comm.~ Algebra}, 36(7):2503--2543, 2008.

\bibitem{braunqSteinerexist2016}
M.~Braun, T.~Etzion, P.~R.~J. {\"O}sterg{\aa}rd, A.~Vardy, and A.~Wassermann.
\newblock Existence of $q$-analogs of {S}teiner systems.
\newblock {\em Forum Math.~ Pi}, 4(7):1--14, 2016.

\bibitem{braun2018q}
M.~Braun, M.~Kiermaier, and A.~Wassermann.
\newblock $q$-{A}nalogs of designs: Subspace designs.
\newblock In {\em Network Coding and Subspace Designs}, pages 171--211.
  Springer, 2018.

\bibitem{classiccolva}
J.~N. Bray, D.~F. Holt, and C.~M. Roney-Dougal.
\newblock {\em The Maximal Subgroups of the Low-Dimensional Finite Classical
  Groups}.
\newblock London Mathematical Society Lecture Note Series, 407. Cambridge
  University Press, 2013.

\bibitem{brouwer}
A.~E. Brouwer, A.~M. Cohen, and A.~Neumaier.
\newblock {\em Distance-Regular Graphs}, volume~18 of {\em Ergebnisse der
  Mathematik und ihrer Grenzgebiete (3) [Results in Mathematics and Related
  Areas (3)]}.
\newblock Springer-Verlag, Berlin, 1989.

\bibitem{cameron1993block}
P.~J. Cameron and C.~E. Praeger.
\newblock Block-transitive $t$-designs {I}: point-imprimitive designs.
\newblock {\em Discrete Math.}, 118(1-3):33--43, 1993.

\bibitem{cameron1973generalisation}
P.~J.~ Cameron.
\newblock Generalisation of {F}isher's inequality to fields with more than one
  element.
\newblock {\em Combinatorics, London Math. Soc. Lecture Note Ser}, 13:9--13,
  1973.

\bibitem{camina1984block}
A.~R. Camina and T.~M. Gagen.
\newblock Block transitive automorphism groups of designs.
\newblock {\em J.~ Algebra}, 86(2):549--554, 1984.

\bibitem{camina1989block}
A.~ Camina and J.~ Siemons.
\newblock Block transitive automorphism groups of $2-(v, k, 1)$ block designs.
\newblock {\em J.~ Combin.~ Theory Ser.~ A}, 51(2):268--276, 1989.

\bibitem{GAP4}
The GAP~Group.
\newblock {\em {GAP -- Groups, Algorithms, and Programming, Version 4.11.0}},
  2018.

\bibitem{guralnick1999linear}
R.~Guralnick, T.~Penttila, C.~E. Praeger, and J.~Saxl.
\newblock Linear groups with orders having certain large prime divisors.
\newblock {\em Proc.~Lond.~ Math.~ Soc.}, 78(1):167--214,
  1999.

\bibitem{hall1960automorphisms}
M.~Hall.
\newblock Automorphisms of steiner triple systems.
\newblock {\em IBM Journal of Research and Development}, 4(5):460--472, 1960.

\bibitem{BlockTransitiveSubspaceDesignsSupplement}
D.~R. Hawtin and J.~Lansdown.
\newblock The non-existence of block-transitive subspace designs -
  supplementary materials.
\newblock Available at \href {http://doi.org/10.5281/zenodo.4491078}
  {\texttt{http://doi.org/10.5281/zenodo.4491078}}.

\bibitem{huiling1995block}
L.~Huiling.
\newblock On block-transitive $2$-$(\nu, 4, 1)$ designs.
\newblock {\em J.~ Combin.~ Theory Ser.~ A}, 69(1):115--124,
  1995.

\bibitem{Kantor1969}
W.~M. Kantor.
\newblock Automorphism groups of designs.
\newblock {\em Math.~ Z.}, 109(3):246--252, 1969.

\bibitem{kantor19752}
W.~M. Kantor.
\newblock 2-{T}ransitive designs.
\newblock In {\em Combinatorics}, 16:365--418, 1975.

\bibitem{kantor1985classification}
W.~M. Kantor.
\newblock Classification of 2-transitive symmetric designs.
\newblock {\em Graphs Combin.}, 1(1):165--166, 1985.

\bibitem{kiermaier2018order}
M.~Kiermaier, S.~Kurz, and A.~Wassermann.
\newblock The order of the automorphism group of a binary $q$-analog of the
  {F}ano plane is at most two.
\newblock {\em Des.~ Codes Cryptogr.}, 86(2):239--250, 2018.

\bibitem{miyakawa1995class}
M.~Miyakawa, A.~Munemasa, and S.~Yoshiara.
\newblock On a class of small 2-designs over ${\rm {G}{F}}(q)$.
\newblock {\em J.~ Combin.~ Des.}, 3(1):61--77, 1995.

\bibitem{o1993block}
C.~M. O'Keefe, T.~Penttila, and C.~E. Praeger.
\newblock Block-transitive, point-imprimitive designs with $\lambda= 1$.
\newblock {\em Discrete Math.}, 115(1-3):231--244, 1993.

\bibitem{parker1957collineations}
E.~T. Parker.
\newblock On collineations of symmetric designs.
\newblock {\em Proc.~ Amer.~ Math.~ Soc.},
  8(2):350--351, 1957.

\bibitem{suzuki19902}
H.~Suzuki.
\newblock 2-{D}esigns over ${\rm {G}{F}}(2^m)$.
\newblock {\em Graphs Combin.}, 6(3):293--296, 1990.

\bibitem{suzuki19922}
H.~Suzuki.
\newblock 2-{D}esigns over ${\rm {G}{F}}(q)$.
\newblock {\em Graphs Combin.}, 8(4):381--389, 1992.

\bibitem{suzuki1990inequalities}
H.~ Suzuki.
\newblock On the inequalities of t-designs over a finite field.
\newblock {\em European J.~Combin.}, 11(6):601--607, 1990.

\bibitem{thomas1987designs}
S.~Thomas.
\newblock Designs over finite fields.
\newblock {\em Geom.~ Dedicata}, 24(2):237--242, 1987.

\bibitem{tits1957analogues}
J.~Tits.
\newblock Sur les analogues alg{\'e}briques des groupes semi-simples complexes.
\newblock In {\em Colloque d'Alg{\`e}bre sup{\'e}rieure}, pages 261--289.
  {\'E}tablissements Ceuterick, 1957.

\bibitem{wilson2009finite}
R.~Wilson.
\newblock {\em The finite simple groups}, volume 251.
\newblock Springer Science \& Business Media, 2009.

\bibitem{zsigmondy1892theorie}
K.~Zsigmondy.
\newblock Zur Theorie der Potenzreste.
\newblock {\em Monatshefte f{\"u}r Mathematik und Physik}, 3(1):265--284, 1892.

\end{thebibliography}
\end{document}